\documentclass[12pt]{amsart}
\usepackage{amsthm}
\usepackage{amssymb}
\usepackage{geometry}
\geometry{verbose,tmargin=0.8in,bmargin=0.8in,lmargin=1in,rmargin=1in}
\usepackage{enumerate}
\usepackage{setspace}
\usepackage{pgfkeys}
\usepackage{upgreek}
\usepackage{tikz}
\usepackage{tikz-cd}
\usetikzlibrary{matrix, arrows}
\usepackage[unicode=true]
 {hyperref}
\hypersetup{
colorlinks=true,
urlcolor=black,
citecolor=blue,
linkcolor=blue,
}

\onehalfspacing

\newtheorem{thm}{Theorem}[section]
\newtheorem{prop}[thm]{Proposition}
\newtheorem{lem}[thm]{Lemma}

\theoremstyle{definition}
\newtheorem{definition}[thm]{Definition}

\theoremstyle{remark}
\newtheorem{remark}[thm]{Remark}

\numberwithin{equation}{section}

\begin{document}

\large 

\title[The Square Root of the Inverse Different]{Galois Module Structure of the Square Root of the Inverse Different over Maximal Orders}

\author{Cindy (Sin Yi) Tsang}
\address{Department of Mathematics, University of California, Santa Barbara}
\email{cindytsy@math.ucsb.edu}
\urladdr{http://sites.google.com/site/cindysinyitsang/} 

\date{\today}

\begin{abstract}Let $K$ be a number field with ring of integers $\mathcal{O}_K$ and let $G$ be a \mbox{finite group of} odd order. Given a $G$-Galois $K$-algebra $K_h$, let $A_h$ be the square root of the inverse different of $K_h/K$, which exists by Hilbert's formula. If $K_h/K$ is weakly ramified, then $A_h$ is locally free over $\mathcal{O}_{K}G$ by a result of B. Erez, in which case it determines a class in the locally free class group $\mbox{Cl}(\mathcal{O}_KG)$ of $\mathcal{O}_KG$. Such a class in $\mbox{Cl}(\mathcal{O}_KG)$ is said to be $A$-realizable, and tame $A$-realizable if $K_h/K$ is tame. Let $\mathcal{A}(\mathcal{O}_KG)$ and $\mathcal{A}^t(\mathcal{O}_KG)$ denote the sets of all $A$-realizable classes and tame $A$-realizable classes, respectively. For $G$ abelian, we will show that the two sets $\mathcal{A}(\mathcal{O}_KG)$ and $\mathcal{A}^t(\mathcal{O}_KG)$ are equal when extended scalars to the maximal order in $KG$.
\end{abstract}

\maketitle

\tableofcontents

\section{Introduction}\label{intro}

Let $K$ be a number field with ring of integers $\mathcal{O}_K$ and let $G$ be a \mbox{finite group.} Let $\Omega_K$ denote the absolute Galois group of $K$ and let $\Omega_K$ act \mbox{trivially on $G$} (on the left). Then, the set of all isomorphism classes of $G$-Galois $K$-algebras (see Section~\ref{Galgebra} for a brief review) is in bijection with the pointed set 
\[
H^1(\Omega_K,G)=\mbox{Hom}(\Omega_K,G)/\mbox{Inn}(G).
\]
Given $h\in H^1(\Omega_K,G)$, we will write $K_h$ for a Galois algebra \mbox{representative. If} the inverse different of $K_h/K$ has a square root, then we will denote it by $A_h$. We will study the Galois module structure of $A_h$ in this paper.

In what follows, assume that $G$ has odd order. Then, for any $h\in H^1(\Omega_K,G)$, the inverse different of $K_h/K$ has a square root by Proposition~\ref{Hformula} below.

\begin{prop}\label{Hformula}Let $p$ be a prime number and let $F/\mathbb{Q}_p$ be a finite extension. Let $N/F$ be a finite Galois extension with different ideal $\mathfrak{D}_{N/F}$ and let $\pi_N$ \mbox{be a} uniformizer in $N$. Then, we have $\mathfrak{D}_{N/F}=(\pi_N)^{v_N(\mathfrak{D}_{N/F})}$ for
\begin{equation}\label{hilbert}
v_N(\mathfrak{D}_{N/F})=\sum_{n=0}^{\infty}(|\mbox{Gal}(N/F)_n|-1),
\end{equation}
where $\mbox{Gal}(N/F)_n$ is the $n$-th ramification group of $N/F$ in lower numbering.
\end{prop}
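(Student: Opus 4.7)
The plan is to follow Serre's classical proof (\emph{Local Fields}, Ch.~IV, Prop.~4). First, I would reduce to the totally ramified case. Let $G = \mbox{Gal}(N/F)$, let $G_n$ denote its $n$-th ramification group in lower numbering, and let $N_0 = N^{G_0}$ be the fixed field of the inertia subgroup. Then $N_0/F$ is unramified so $\mathfrak{D}_{N_0/F} = \mathcal{O}_{N_0}$, and by the multiplicativity of the different in towers one has $\mathfrak{D}_{N/F} = \mathfrak{D}_{N/N_0}\cdot\mathfrak{D}_{N_0/F}\mathcal{O}_N = \mathfrak{D}_{N/N_0}$. Since the lower-numbering ramification filtration is preserved under passage to subgroups, the ramification groups of $N/N_0$ coincide with those of $N/F$ (all terms with $n<0$ being dropped, but the sum in (\ref{hilbert}) starts at $n=0$). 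Hence we may assume $N/F$ is totally ramified.

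Next, I would invoke two standard facts from the local theory of totally ramified extensions: (i) the ring of integers is monogenic, $\mathcal{O}_N = \mathcal{O}_F[\pi_N]$, and (ii) if $f \in \mathcal{O}_F[X]$ denotes the minimal polynomial of $\pi_N$ then $\mathfrak{D}_{N/F} = (f'(\pi_N))$. Factoring $f(X) = \prod_{\sigma\in G}(X-\sigma\pi_N)$ and differentiating yields
\[
f'(\pi_N) = \prod_{\sigma\neq 1}(\pi_N - \sigma\pi_N),
\]
so that $v_N(\mathfrak{D}_{N/F}) = \sum_{\sigma\neq 1} v_N(\pi_N - \sigma\pi_N)$.

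Finally, I would use the very definition of the ramification filtration: an element $\sigma\in G$ lies in $G_n$ precisely when $v_N(\sigma\pi_N - \pi_N) \geq n+1$, which in the totally ramified case is equivalent to $\sigma$ acting trivially on $\mathcal{O}_N/\mathfrak{p}_N^{n+1}$. Writing $i_G(\sigma) := v_N(\sigma\pi_N - \pi_N)$ for $\sigma\neq 1$, this gives $i_G(\sigma) = \#\{n\geq 0 : \sigma\in G_n\}$, and swapping the order of summation,
\[
\sum_{\sigma\neq 1} i_G(\sigma) \;=\; \sum_{n=0}^{\infty}\#\{\sigma\neq 1 : \sigma\in G_n\} \;=\; \sum_{n=0}^{\infty}(|G_n|-1),
\]
which is exactly (\ref{hilbert}).

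The only genuinely nontrivial inputs are the monogenicity of $\mathcal{O}_N/\mathcal{O}_F$ for totally ramified local extensions and the identity $\mathfrak{D}_{N/F} = (f'(\pi_N))$; both are standard but do the real work. Everything else is bookkeeping. Since the paper presents this proposition largely as motivation for the definition of $A_h$, I would expect the author either to sketch precisely this argument or simply to cite Serre.
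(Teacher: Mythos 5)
Your argument is correct and is precisely the proof of \cite[Chapter IV, Proposition 4]{S} that the paper cites without reproducing: reduction to the totally ramified case via the inertia field, monogenicity plus $\mathfrak{D}_{N/F}=(f'(\pi_N))$, and the interchange of summation over $\sigma\neq 1$ and $n\geq 0$. Nothing further is needed.
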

\begin{proof}
See \cite[Chapter IV, Proposition 4]{S}, for example. We remark that (\ref{hilbert}) is also known as Hilbert's formula.
\end{proof}

If $h\in H^1(\Omega_K,G)$ is such that $K_h/K$ is \emph{weakly ramified} (see Definition~\ref{ramification}), then $A_h$ is locally free over $\mathcal{O}_KG$ by \cite[Theorem 1 in Section 2]{E} and it deter- mines a class $\mbox{cl}(A_h)$ in the locally free class group $\mbox{Cl}(\mathcal{O}_KG)$ of $\mathcal{O}_KG$. \mbox{Such a} class in $\mbox{Cl}(\mathcal{O}_KG)$ is said to be \emph{$A$-realizable}, and \emph{tame $A$-realizable} \mbox{if $K_h/K$ is} tame. We will write
\[
\mathcal{A}(\mathcal{O}_KG):=\{\mbox{cl}(A_h):h\in H^1(\Omega_K,G)\mbox{ with $K_h/K$ weakly ramified}\}
\]
for the set of all $A$-realizable classes in $\mbox{Cl}(\mathcal{O}_KG)$, and
\[
\mathcal{A}^t(\mathcal{O}_KG):=\{\mbox{cl}(A_h):h\in H^1(\Omega_K,G)\mbox{ with $K_h/K$ tame}\}
\]
for the subset of $\mathcal{A}(\mathcal{O}_KG)$ consisting of the tame $A$-realizable classes. 

In what follows, assume that $G$ is abelian in addition to having odd order. In \cite[(12.1) and Theorem 1.3]{T}, the author gave a complete characterization of the set $\mathcal{A}^t(\mathcal{O}_KG)$ and showed that $\mathcal{A}^t(\mathcal{O}_KG)$ is a subgroup of $\mbox{Cl}(\mathcal{O}_KG)$. In \cite[Theorem 1.6]{T}, the author further showed that for \mbox{any $h\in H^1(\Omega_K,G)$ with} $K_h/K$ weakly ramified, if the wildly ramified primes of $K_h/K$ satisfy certain extra hypotheses, then we have $\mbox{cl}(A_h)\in\mathcal{A}^t(\mathcal{O}_KG)$. It is then natural to ask whether the two sets $\mathcal{A}(\mathcal{O}_KG)$ and $\mathcal{A}^t(\mathcal{O}_KG)$ are in fact equal.

In this paper, we will prove that $\mathcal{A}(\mathcal{O}_KG)$ and $\mathcal{A}^t(\mathcal{O}_KG)$ become equal once we extend scalars to the maximal $\mathcal{O}_K$-order $\mathcal{M}(KG)$ in $KG$. More precisely, let $\mbox{Cl}(\mathcal{M}(KG))$ denote the locally free class group of $\mathcal{M}(KG)$ and let
\[
\Psi:\mbox{Cl}(\mathcal{O}_KG)\longrightarrow\mbox{Cl}(\mathcal{M}(KG))
\]
be the natural homomorphism afforded by extension of scalars. From another result \cite[Theorem 1.7]{T} of the author, we have $\Psi(\mathcal{A}(\mathcal{O}_KG))=\Psi(\mathcal{A}^t(\mathcal{O}_KG))$, provided that every prime divisor of $|G|$ is unramified in $K/\mathbb{Q}$. We will show that this additional hypothesis is unnecessary. 

\begin{thm}\label{equal}
Let $K$ be a number field and let $G$ be a finite abelian \mbox{group of} odd order. Then, we have $\Psi(\mathcal{A}(\mathcal{O}_KG))=\Psi(\mathcal{A}^t(\mathcal{O}_KG))$.
\end{thm}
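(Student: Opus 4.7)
Since $\mathcal{A}^t(\mathcal{O}_KG) \subseteq \mathcal{A}(\mathcal{O}_KG)$ by definition, only the inclusion $\Psi(\mathcal{A}(\mathcal{O}_KG)) \subseteq \Psi(\mathcal{A}^t(\mathcal{O}_KG))$ requires proof, and the plan is to refine the argument of \cite[Theorem 1.7]{T} so as to eliminate its hypothesis that the prime divisors of $|G|$ are unramified in $K/\mathbb{Q}$. Given $h \in H^1(\Omega_K,G)$ with $K_h/K$ weakly ramified, the goal is to construct $h' \in H^1(\Omega_K,G)$ with $K_{h'}/K$ tame such that $\Psi(\mbox{cl}(A_h)) = \Psi(\mbox{cl}(A_{h'}))$. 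Because $G$ is abelian, $\mathcal{M}(KG)$ factors as a product of rings of integers of the character subfields, and consequently $\mbox{Cl}(\mathcal{M}(KG))$ decomposes as a direct sum indexed by the $\Omega_K$-orbits of characters of $G$. The desired equality can therefore be verified componentwise via resolvents.

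I would proceed locally. For each prime $\mathfrak{p}$ of $K$, let $h_\mathfrak{p}$ denote the restriction of $h$ to the decomposition group at $\mathfrak{p}$. At tame primes I would set $h'_\mathfrak{p} := h_\mathfrak{p}$. At a wild prime $\mathfrak{p}$ (which necessarily lies over a rational prime dividing $|G|$), I would use Hilbert's formula~(\ref{hilbert}) together with the weakly ramified hypothesis to describe the square root of the inverse different of $(K_h)_\mathfrak{p}/K_\mathfrak{p}$ explicitly and compute its character-wise resolvents. I would then exhibit a tame local homomorphism $h'_\mathfrak{p} \in \mbox{Hom}(\Omega_{K_\mathfrak{p}}, G)$ whose resolvent lies in the same coset modulo the image of local units as that of $h_\mathfrak{p}$, so that the two make the same contribution to $\mbox{Cl}(\mathcal{M}(KG))$. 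The local data $\{h'_\mathfrak{p}\}$ would be patched into a global tame $h' \in H^1(\Omega_K,G)$ by a Grunwald--Wang type density argument; because $|G|$ is odd, the special-case obstruction does not appear. The equality $\Psi(\mbox{cl}(A_h)) = \Psi(\mbox{cl}(A_{h'}))$ then follows by the local-global description of $\mbox{Cl}(\mathcal{M}(KG))$ noted above.

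The main obstacle is the local replacement step at a wildly ramified prime $\mathfrak{p}$ whose residue characteristic $p$ also ramifies in $K/\mathbb{Q}$, which is precisely where the proof of \cite[Theorem 1.7]{T} breaks down. There, the hypothesis is used to identify $K_\mathfrak{p}$ with an unramified extension of $\mathbb{Q}_p$, which enables the transport of explicit tame resolvent computations from $\mathbb{Z}_p$-modules. To bypass this, one must work intrinsically inside $K_\mathfrak{p}$: the weakly ramified hypothesis forces the wild inertia to be elementary abelian of exponent $p$ and truncates the ramification filtration at level one, which should give enough control to compute the character-wise valuation of the resolvent of $A_h$ directly from Hilbert's formula, without appeal to an unramified base. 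The construction of the tame local replacement then reduces to surjectivity of an explicit map between local unit quotients, a question that can be handled using the structure theory of local characters of $p$-power conductor.
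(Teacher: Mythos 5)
You have the right high-level picture: the theorem reduces to removing the ``prime divisors of $|G|$ unramified in $K/\mathbb{Q}$'' hypothesis from \cite[Theorem 1.7]{T}, and the obstruction is concentrated in the local analysis of $A_h$ at a wild prime $\mathfrak{p}$ whose residue characteristic ramifies in $K$. You also correctly isolate that the weak ramification hypothesis forces wild inertia to be elementary $p$-abelian with $\mathrm{Gal}_0 = \mathrm{Gal}_1$ and $\mathrm{Gal}_2 = 1$ (this is Proposition~\ref{Aexists} here). However, the proposal stops exactly where the real work begins, and the two places where you wave your hands are precisely the paper's new content.

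First, you assert that one should ``compute the character-wise valuation of the resolvent of $A_h$ directly from Hilbert's formula.'' Hilbert's formula alone does not give this. The statement that needs proving is Theorem~\ref{units}: for a generator $a$ of $A_h$, every resolvent $(a\mid\chi)$ is a unit. The case that your intrinsic strategy must handle anew is $p$ odd with $[F(\zeta_p):F]$ odd (if $F/\mathbb{Q}_p$ is unramified then $F(\zeta_p)/F$ is totally ramified of even degree $p-1$, so this case only arises when $p$ ramifies in $F/\mathbb{Q}_p$). The paper's proof of this (Proposition~\ref{valtotram}) is not a direct valuation count: it introduces the Gauss sums $G(\varphi,j)=\sum_{k}\varphi(k)\zeta^{jk}$ over $\mathbb{F}_p^\times$-characters, proves the lower bound $v_{\mathbb{Q}_p(\zeta)}(G(\varphi,1))\geq (p-1)/n$ (Proposition~\ref{GaussVal}), and derives a contradiction by showing that the putative positive valuation of some $(a\mid\chi)$ would force $v_{F^h}(p\sum_{s\in H_0}a(s))=0$, which is incompatible with $a\in A_h$. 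Nothing in your sketch produces this mechanism.

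Second, the statement ``the construction of the tame local replacement then reduces to surjectivity of an explicit map between local unit quotients'' does not capture what is needed. What the paper proves (Proposition~\ref{rrwildmax}) is that for a wild weakly ramified $h$ with $[F^h:F]=p$ there is an \emph{explicit} normal basis generator $a$ of $F_h$ with $r_G(a)=\Theta^t_*(g)$ for some $g\in\Lambda(FG)^\times$ and with all resolvents units. The construction is concrete: from a generator $\alpha'$ of $A^h$ over $\mathcal{O}_F\mathrm{Gal}(F^h/F)$ (Johnston's theorem), one forms $y_i=\sum_k\tau^k(\alpha')\zeta^{-ik}$, then builds $\alpha=\tfrac{1}{p}\sum_{k}\prod_{i\in R_n}y_i^{c(i^{-1}k)}$, and the function $g$ is defined directly from $y_i^p$. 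This is then combined with the factorizations $h=h^{nr}h^{tot}$ and $h^{tot}=h_1\cdots h_r$ with each $h_i$ of degree $p$, and with the unit statement of Theorem~\ref{units}, to obtain the decomposition $r_G(a)=\mathrm{rag}(\gamma)\,u\,\Theta^t_*(g)$ with $\gamma\in\mathcal{M}(FG)^\times$ (Theorem~\ref{decomp}). That decomposition, substituted for \cite[Theorem 16.1]{T}, is what makes the proof of \cite[Theorem 1.7]{T} go through. A separate point: the paper does not literally manufacture a global tame $h'$ and then appeal to Grunwald--Wang; it feeds the local decomposition into the already-established idelic characterization of $\mathcal{A}^t(\mathcal{O}_KG)$ from \cite[(12.1)]{T}. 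Your patching route is plausible in spirit, but you would be re-deriving that characterization rather than using it, and you would still need the two local results above to make it work.
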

\begin{proof}See Remark~\ref{pfequal} below.
\end{proof}

Using the characterization of the set $\mathcal{A}^t(\mathcal{O}_KG)$ given in \cite[(12.1)]{T}, the proof of Theorem~\ref{equal} reduces to computing the local generators \mbox{of $A_h$ over $\mathcal{O}_KG$ for} each $h\in H^1(\Omega_K,G)$ with $K_h/K$ wildly and weakly ramified. Such generators at the tame primes of $K_h/K$ have already been characterized in \cite[Theorems 10.3 and 10.4]{T}. We will compute these generators at the wild primes of $K_h/K$ in this paper. We will prove (see \mbox{Sections~\ref{notation} and \ref{prereq}} for the notation):

\begin{thm}\label{decomp}Let $p$ be a prime number and let $F/\mathbb{Q}_p$ be a finite extension. Let $G$ be a finite abelian group of odd order and let $h\in H^1(\Omega_F,G)$ be such that $F_h/F$ is wildly and weakly ramified. If $A_h=\mathcal{O}_FG\cdot a$, then
\[
r_G(a)=rag(\gamma)u\Theta^t_*(g)
\]
for some $\gamma\in\mathcal{M}(FG)^\times$, $u\in\mathcal{H}(\mathcal{O}_FG)$, and $g\in \Lambda(FG)^\times$.
\end{thm}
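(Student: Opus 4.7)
The plan is to reduce to the case of a Galois field extension, use an explicit generator of $A_h$ coming from Erez's work, compute its resolvend, and identify the three factors in the asserted decomposition. First I would exploit the fact that $G$ is abelian: any $h$ factors through a decomposition subgroup $H\leq G$, so $F_h$ is induced from a field extension with Galois group $H$, and the formation of resolvends is compatible with this induction. This reduces the problem to the case in which $h$ is surjective and $F_h/F$ is a field of degree $|G|$. Denote the inertia subgroup by $G_0$ and the wild inertia by $G_1$. The weakly ramified hypothesis (vanishing of the second ramification group) forces $G_1$ to be elementary abelian of exponent $p$, where $p$ is the residue characteristic of $F$. Since $G$ is abelian of odd order, the sequence $1\to G_1\to G_0\to G_0/G_1\to 1$ splits, giving $G_0=G_1\oplus G_0/G_1$ with $G_0/G_1$ cyclic of order prime to $p$, and $G$ itself is the direct product of $G_0$ with the unramified quotient $G/G_0$.

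Second, this product decomposition of $G$ is reflected in a decomposition of $F_h$ as a compositum of an unramified extension, a totally tamely ramified extension, and a totally wildly and weakly ramified extension with elementary abelian $p$-Galois group $G_1$. A generator of $A_h$ can then be assembled as a product of local generators on each of these three factors. For the unramified and tame factors, the resolvends of such generators have already been characterized in \cite[Theorems 10.3 and 10.4]{T}, and their combined contribution is of the form $\Theta^t_*(g)$ for a suitable $g\in\Lambda(FG)^\times$. For the wild factor, Erez's construction of an integral normal basis for $A_h$ at weakly ramified primes (the source of \cite{E}) supplies an explicit element $a_w$ whose resolvend is expressible in terms of a uniformizer of the wildly ramified subfield and Lagrange-type character sums indexed by the characters of $G_1$.

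Finally, with an explicit formula for $r_G(a)$ in hand, the decomposition step amounts to factoring it as $rag(\gamma)\cdot u\cdot\Theta^t_*(g)$. The element $\gamma\in\mathcal{M}(FG)^\times$ is built character-by-character from the wild uniformizer: it lies naturally in the maximal order but not in $\mathcal{O}_FG$ because the character values of the wild uniformizer carry fractional content coming from Hilbert's formula. After dividing $r_G(a)$ by $rag(\gamma)$ and by the tame factor $\Theta^t_*(g)$, one is left with a residual factor $u$, and the key task is to verify that $u\in\mathcal{H}(\mathcal{O}_FG)$, i.e.\ that it is a genuine integral unit-resolvend and not merely a unit in $\mathcal{M}(FG)$. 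I expect this last verification to be the main obstacle: it requires a character-by-character accounting of the ramification filtration, using Hilbert's formula to track the exact contribution of each nontrivial character of $G_1$ to the valuation of the resolvend, and confirming that the choice of $\gamma$ absorbs precisely the non-integral content so that the remaining factor $u$ is integral at the unique prime of $F$. Once this is done, the multiplicativity of resolvends assembles the three pieces into the asserted decomposition.
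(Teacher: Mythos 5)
Your proposed decomposition misassigns which ramification pieces produce which of the three factors, and this makes the residual verification much harder than it needs to be. In the paper, the unramified part of $h$ supplies the factor $u\in\mathcal{H}(\mathcal{O}_FG)$ (essentially by definition, since $\mathcal{H}(\mathcal{O}_FG)$ consists of reduced resolvends of generators of $\mathcal{O}_{h'}$ for unramified $h'$), while the \emph{wild} part supplies $\Theta^t_*(g)$, and $\gamma$ is the residual factor. Proving $\gamma\in\mathcal{M}(FG)^\times$ then reduces to showing that all resolvents $(a\mid\chi)$ are units in $\mathcal{O}_{F^c}$, which is Theorem~\ref{units} and is established via explicit Gauss sum valuation estimates. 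You instead attribute $\Theta^t_*(g)$ to the unramified and (trivial, see below) tame pieces, try to absorb the wild content into $\gamma$, and leave $u$ as the residue to be verified in $\mathcal{H}(\mathcal{O}_FG)$. That verification is genuinely harder: membership in $\mathcal{H}(\mathcal{O}_FG)$ requires lifting to $(\mathcal{O}_{F^c}G)^\times$, a far more rigid condition than being a unit in the maximal order, and you offer no route to it. The central discovery of the paper is precisely that the wildly-and-weakly-ramified resolvends take the Stickelberger-transpose form $\Theta^t_*(g)$, proved in Proposition~\ref{rrwildmax} by a careful construction for degree-$p$ extensions (using Johnston's explicit generator of $A^h$, not Erez's original argument) and then assembled over an elementary $p$-abelian decomposition $h^{tot}=h_1\cdots h_r$; your sketch has no analogue of this step.

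Two further points. First, your reduction assumes $G\cong G_0\times(G/G_0)$; even for abelian $G$ this splitting need not exist, and the paper sidesteps the issue entirely by factoring the homomorphism $h=h^{nr}h^{tot}$ inside $\mathrm{Hom}(\Omega_F,G)$ (Proposition~\ref{factor}) rather than factoring the group. Second, under the hypothesis that $h$ is wildly and weakly ramified, Proposition~\ref{Aexists} forces $\mathrm{Gal}(F^h/F)_0=\mathrm{Gal}(F^h/F)_1$, so the totally ramified piece is purely wild and elementary $p$-abelian — there is no nontrivial ``totally tamely ramified'' factor in this situation, contrary to the tripartite decomposition you propose.
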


\begin{remark}\label{pfequal}The proof of Theorem~\ref{equal} is that of \cite[Theorem 1.7]{T} verbatim, except we will need to use Theorem~\ref{decomp} above in place of \cite[Theorem 16.1]{T}. \mbox{To avoid} repetition, we will only prove Theorem~\ref{decomp} in this paper.
\end{remark}

\section{Notation and Conventions}\label{notation}

Throughout this paper, unless specified, the symbol $F$ will denote a number field or a finite extension of $\mathbb{Q}_p$ for some prime number $p$. We will \mbox{also fix a} finite abelian group $G$ and we will use the convention that the homomorphisms in the cohomology groups considered are continuous.

For such a field $F$, fix an algebraic closure $F^c$ of $F$ and let $\Omega_F$ denote the Galois group of $F^c/F$. Let $\mathcal{O}_F$ denote the ring of integers in $F$ and write $\mathcal{O}_{F^c}$ for its integral closure in $F^c$. We will let $\Omega_F$ act trivially on $G$ (on the left) and choose a compatible set $\{\zeta_n:n\in\mathbb{Z}^+\}$ of primitive roots of unity in $F^c$. We will also write $\widehat{G}$ for the group of irreducible $F^c$-valued characters on $G$, and $\mathcal{M}(FG)$ for the maximal $\mathcal{O}_F$-order in $FG$.

When $F$ is a finite extension of $\mathbb{Q}_p$, given a finite extension $N/F$, say with uniformizer $\pi_N$ in $N$, let $v_N:N\longrightarrow\mathbb{Z}\cup\{\infty\}$ denote the additive valuation on $N$ for which $v_N(\pi_N)=1$. Given a fractional $\mathcal{O}_N$-ideal $\mathfrak{A}$ in $N$, \mbox{we will also} write $v_N(\mathfrak{A})$ for the unique integer for which $\mathfrak{A}=(\pi_N)^{v_N(\mathfrak{A})}$. Finally, if $N/F$ is Galois, then for each $n\in\mathbb{Z}_{\geq 0}$, let $\mbox{Gal}(N/F)_n$ denote the $n$-th ramification group of $N/F$ in lower numbering.

\section{Prerequisites}\label{prereq}

In this section, we will define the notation used in the statement of Theorem~\ref{decomp}, in particular \emph{reduced resolvends} and the \emph{modified Stickelberger tranpose}. The former was introduced by L. McCulloh in \cite[Section 2]{M}, where he studied the Galois module structure of rings of integers. The latter \mbox{was intro-} duced by the author in \cite[Section 8]{T} by modifying the definition of the \emph{Stickelberger tranpose} defined by McCulloh in \cite[Section 4]{M}.

\subsection{Galois Algebras and Resolvends}\label{Galgebra} We will give a brief review of Galois algebras and resolvends (see \cite[Section 1]{M} for a more detailed discussion). We note that their definitions still make sense even when $G$ is not abelian.

\begin{definition}\label{GaloisAlg}A \emph{Galois algebra over $F$ with group $G$} or \emph{$G$-Galois $F$-algebra} is a commutative semi-simple $F$-algebra $N$ on which $G$ acts (on the left) as a group of automorphisms satisfying $N^{G}=F$ and $[N:F]=|G|$. Two $G$-Galois $F$-algebras are said to be \emph{isomorphic} if there exists an \mbox{$F$-algebra isomorphism} between them which preserves the action of $G$.
\end{definition}

The set of all isomorphism classes of $G$-Galois $F$-algebras may be shown to be in bijective correspondence with the pointed set
\[
H^1(\Omega_F,G):=\mbox{Hom}(\Omega_F,G)/\mbox{Inn}(G).
\]
Since the fixed finite group $G$ is abelian, the isomorphism classes of $G$-Galois $F$-algebras may be identified with the homomorphisms in $\mbox{Hom}(\Omega_F,G)$. More specifically, each $h\in\mbox{Hom}(\Omega_F,G)$ is associated to the $F$-algebra
\[
F_{h}:=\mbox{Map}_{\Omega_F}(^{h}G,F^{c}),
\]
where $^{h}G$ is the group $G$ endowed with the $\Omega_F$-action given by
\[
(\omega\cdot s):=h(\omega)s\hspace{1cm}\mbox{for $s\in G$ and $\omega\in\Omega_F$}.
\]
The $G$-action on $F_h$ is given by
\[
(s\cdot a)(t):=a(ts)\hspace{1cm}\mbox{for $a\in F_h$ and $s,t\in G$}.
\]
Given a set $\{s_i\}$ of coset representatives for $h(\Omega_F)\backslash G$, note that each $a\in F_h$ is uniquely determined by the values $a(s_i)$, and these $a(s_i)$ may be arbitrarily chosen provided that they are fixed by all $\omega\in\ker(h)$. Setting $F^{h}:=(F^{c})^{\ker(h)}$, we see that evaluation at the $s_i$ induces an isomorphism
\[
F_{h}\simeq \prod_{h(\Omega_F)\backslash G}F^{h}
\]
of $F$-algebras. The above isomorphism depends on the choice of the set $\{s_i\}$.

\begin{definition}\label{ramification}
Given $h\in\mbox{Hom}(\Omega_F,G)$, we say that $F_h/F$ or $h$ is \emph{unramified} if $F^h/F$ is unramified. Similarly for \emph{tame}, \emph{wild}, and \emph{weakly ramified}. Recall that a Galois extension over $F$ is said to be weakly ramified if all of its second ramification groups (in lower numbering) are trivial.
\end{definition}

\begin{definition}Given $h\in\mbox{Hom}(\Omega_F,G)$, let $\mathcal{O}^h:=\mathcal{O}_{F^h}$ and define
\[
\mathcal{O}_h:=\mbox{Map}_{\Omega_F}(^hG,\mathcal{O}^h).
\]
If the inverse different of $F^h/F$ is a square, let $A^h$ be its square root and set
\[
A_h:=\mbox{Map}_{\Omega_F}(^hG,A^h).
\]
In the sequel, whenever we write $A_h$ for some $h\in\mbox{Hom}(\Omega_F,G$), we implicitly assume that $A^h$ exists (by Proposition~\ref{Hformula}, this is so when $G$ has odd order).
\end{definition}

\begin{prop}\label{Aexists}Let $F$ be a finite extension of $\mathbb{Q}_p$ and let $h\in\mbox{Hom}(\Omega_F,G)$ be wildly and weakly ramified. Then, the inverse different of $F^h/F$ is a square, and we have $v_{F^h}(A^h)=1-|\mbox{Gal}(F^h/F)_0|$. Moreover, the group $\mbox{Gal}(F^h/F)_0$ is equal to $\mbox{Gal}(F^h/F)_1$ and is elementary $p$-abelian.
\end{prop}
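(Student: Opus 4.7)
The plan is to first determine the structure of the ramification filtration of $F^h/F$ forced by the hypotheses, and then to read off $v_{F^h}(\mathfrak{D}_{F^h/F})$ directly from Hilbert's formula (\ref{hilbert}). Write $G_n := \mbox{Gal}(F^h/F)_n$ for brevity. The weakly ramified hypothesis gives $G_n = 1$ for all $n \geq 2$, so only the terms $n = 0$ and $n = 1$ contribute to the sum in (\ref{hilbert}). Once I show that $G_0 = G_1$, the valuation of the different is $2(|G_0|-1)$, hence even, and the inverse different is then a square with $v_{F^h}(A^h) = 1 - |G_0|$.

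The structural content thus reduces to proving (i) $G_1$ is elementary abelian of exponent $p$, and (ii) $G_0 = G_1$. Step (i) is standard: the canonical injection
\[
\theta_n\colon G_n/G_{n+1} \hookrightarrow U_{F^h}^{(n)}/U_{F^h}^{(n+1)}
\]
from the theory of higher ramification (cf.\ \cite[Ch.~IV]{S}) identifies $G_1 = G_1/G_2$ with a subgroup of the additive group $k_{F^h}^+$ of the residue field, which has exponent $p$. For (ii), I would use that $G$, and hence $G_0$, is abelian, so that the conjugation action of $G_0$ on $G_1$ is trivial. But this conjugation action corresponds, via $\theta_1$, to multiplication by the image of the injective character $\chi\colon G_0/G_1 \hookrightarrow k_{F^h}^\times$ given by $\sigma \mapsto \sigma(\pi_{F^h})/\pi_{F^h} \bmod \pi_{F^h}$. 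Because $h$ is wildly ramified, $G_1 \neq 1$, so the triviality of the action forces $\chi$ to be trivial on $G_0/G_1$, and the injectivity of $\chi$ then yields $G_0 = G_1$.

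The main obstacle is step (ii): the fact that $G$ is abelian is not by itself enough to force $G_0 = G_1$ in general (for instance, $\mathbb{Q}_p(\zeta_{p^2})/\mathbb{Q}_p$ has both $G_1$ and $G_0/G_1$ nontrivial), so one truly needs the weakly ramified hypothesis to activate the fundamental character argument at index $1$. Once (i) and (ii) are in hand, Hilbert's formula yields
\[
v_{F^h}(\mathfrak{D}_{F^h/F}) = (|G_0|-1) + (|G_1|-1) = 2(|G_0|-1),
\]
from which the existence of $A^h$ and the formula $v_{F^h}(A^h) = 1 - |G_0|$ are immediate.
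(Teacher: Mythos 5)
Your argument is correct and matches the paper's approach: both reduce everything to showing $\mbox{Gal}(F^h/F)_0 = \mbox{Gal}(F^h/F)_1$ using the abelian hypothesis together with weak ramification, and then read off the parity of $v_{F^h}(\mathfrak{D}^h)$ from Hilbert's formula. The only difference is one of packaging: where the paper cites Serre (Chapter IV, Proposition 9, Corollary 2) to get $G_n = G_{n+1}$ for $n$ not divisible by $e_0 = [G_0:G_1]$ and specializes to $n=1$, you reprove that special case directly via the fundamental characters $\theta_0$, $\theta_1$ and the triviality of the $G_0$-conjugation action on $G_1/G_2$ -- which is precisely the content of the Serre corollary.
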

\begin{proof}Let $\mathfrak{D}^h$ denote the different ideal of $F^h/F$. Since $h$ is weakly ramified, by Proposition~\ref{Hformula}, we know that
\[
v_{F^h}(\mathfrak{D}^h)=|\mbox{Gal}(F^h/F)_0|+|\mbox{Gal}(F^h/F)_1|-2.
\]
Now, since $G$ is abelian, by \cite[Chapter IV, Proposition 9, Corollary 2]{S}, we have $\mbox{Gal}(F^h/F)_n=\mbox{Gal}(F^h/F)_{n+1}$ for all $n\in\mathbb{Z}_{\geq 0}$ that is not divisible by 
\[
e_0:=[\mbox{Gal}(F^h/F)_0:\mbox{Gal}(F^h/F)_1].
\]
If $e_0\neq 1$, then $\mbox{Gal}(F^h/F)_1=\mbox{Gal}(F^h/F)_2$ and this is impossible because $h$ is wildly and weakly ramified. Hence, we must have $e_0=1$ and so
\[
\mbox{Gal}(F^h/F)_0=\mbox{Gal}(F^h/F)_1.
\]
We then deduce that $\mathfrak{D}^h$ is a square and that $v_{F^h}(A^h)=1-|\mbox{Gal}(F^h/F)_0|$. Because $\mbox{Gal}(F^h/F)_1/\mbox{Gal}(F^h/F)_2$ is elementary $p$-abelian by \cite[Chapter IV, Proposition 7, Corollary 3]{S} and $\mbox{Gal}(F^h/F)_2=1$ by hypothesis, we then see that the group $\mbox{Gal}(F^h/F)_0$ is elementary $p$-abelian as well.
\end{proof}

Next, consider the $F^c$-algebra $\mbox{Map}(G,F^c)$ on which we let $G$ act via
\[
(s\cdot a)(t):=a(ts)\hspace{1cm}\mbox{for $a\in\mbox{Map}(G,F^c)$ and $s,t\in G$}.
\]
Note that $F_h$ is an $FG$-submodule of $\mbox{Map}(G,F^c)$ for all $h\in\mbox{Hom}(\Omega_F,G)$.

\begin{definition}\label{resolvend}
The \emph{resolvend map} $\mathbf{r}_{G}:\mbox{Map}(G,F^{c})\longrightarrow F^{c}G$ is defined by
\[
\mathbf{r}_{G}(a):=\sum\limits _{s\in G}a(s)s^{-1}.
\]
\end{definition}

The map $\mathbf{r}_{G}$ is clearly an isomorphism of $F^cG$-modules, but not an isomorphism of $F^cG$-algebras because it does not preserve multiplication. Moreover, given $a\in\mbox{Map}(G,F^c)$, we have that $a\in F_h$ if and only if
\begin{equation}\label{resol1}
\omega\cdot\mathbf{r}_{G}(a)=\mathbf{r}_{G}(a)h(\omega)
\hspace{1cm}\mbox{for all }\omega\in\Omega_F.
\end{equation}
The next proposition shows that resolvends may be used to identify elements $a\in F_h$ for which $F_h=FG\cdot a$ or $\mathcal{O}_h=\mathcal{O}_FG\cdot a$ or $A_h=\mathcal{O}_FG\cdot a$. Here $[-1]$ denotes the involution on $F^cG$ induced by the involution $s\mapsto s^{-1}$ on $G$.

\begin{prop}\label{NBG} Let $h\in\mbox{Hom}(\Omega_F,G)$ and let $a\in F_h$ be given. We have
\begin{enumerate}[(a)]
\item $F_h=FG\cdot a$ if and only if $\mathbf{r}_{G}(a)\in (F^{c}G)^{\times}$;
\item $\mathcal{O}_h=\mathcal{O}_FG\cdot a$ with $h$ unramified if and only if $\mathbf{r}_G(a)\in(\mathcal{O}_{F^c}G)^{\times}$;
\item $A_h=\mathcal{O}_FG\cdot a$ if and only if $a\in A_h$ and $\mathbf{r}_G(a)\mathbf{r}_G(a)^{[-1]}\in(\mathcal{O}_FG)^\times$.
\end{enumerate}
\end{prop}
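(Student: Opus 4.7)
The plan is to prove the three parts using a common framework built on the resolvend map: (a) and (b) are scalar-extension arguments, while (c) requires a trace-pairing computation combined with the self-duality of $A_h$.

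For (a), note that $\mathbf{r}_G$ is an $F^cG$-module isomorphism from $\mbox{Map}(G,F^c)$ to $F^cG$ (with $G$ acting on the target by left multiplication), as follows by checking $F^c$-linearity and verifying $\mathbf{r}_G(s\cdot a)=s\,\mathbf{r}_G(a)$. After extending scalars to $F^c$, the split-algebra isomorphism $F_h\otimes_F F^c\cong\mbox{Map}(G,F^c)$ turns the equality $F_h=FG\cdot a$ into $F^cG=F^cG\cdot\mathbf{r}_G(a)$, which is equivalent to $\mathbf{r}_G(a)\in(F^cG)^\times$. Part (b) runs through the same argument integrally, once one uses that unramifiedness of $h$ is equivalent to $\mathcal{O}_h\otimes_{\mathcal{O}_F}\mathcal{O}_{F^c}\cong\mbox{Map}(G,\mathcal{O}_{F^c})$ (and to $\mathcal{O}_h$ being $\mathcal{O}_FG$-locally free); this is also what allows one to extract unramifiedness from $\mathbf{r}_G(a)\in(\mathcal{O}_{F^c}G)^\times$ in the reverse direction.

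The substance of (c) rests on the identity
\[
\mathbf{r}_G(a)\,\mathbf{r}_G(b)^{[-1]}=\sum_{u\in G}\mbox{Tr}_{F_h/F}\bigl(a\,(u\cdot b)\bigr)u\in FG,
\]
obtained by expanding the product and reindexing. If $a\in A_h$, then $a(u\cdot a)\in A_h\cdot A_h=\mathfrak{D}_{F_h/F}^{-1}$, so each trace coefficient lies in $\mathcal{O}_F$ and $e:=\mathbf{r}_G(a)\mathbf{r}_G(a)^{[-1]}\in\mathcal{O}_FG$. Using this identity together with $G$-invariance of the trace form, one can compute the dual of $\mathcal{O}_FG\cdot a$ with respect to the $F$-valued trace pairing on $F_h$: provided $\mathbf{r}_G(a)\in(F^cG)^\times$ (which is needed for $\mathcal{O}_FG\cdot a$ to be full-rank and is automatic whenever $e$ is a unit, since $F^cG$ is a product of fields), one obtains $(\mathcal{O}_FG\cdot a)^{*}=\mathcal{O}_FG\cdot e^{-1}\cdot a$.

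The argument then closes using the self-duality $A_h^{*}=A_h$ of the square root of the inverse different. Forward: if $A_h=\mathcal{O}_FG\cdot a$, then $(\mathcal{O}_FG\cdot a)^{*}=A_h^{*}=A_h=\mathcal{O}_FG\cdot a$, which forces $\mathcal{O}_FG\cdot e^{-1}=\mathcal{O}_FG$ and hence $e\in(\mathcal{O}_FG)^\times$. Converse: given $a\in A_h$ and $e\in(\mathcal{O}_FG)^\times$, one has $\mathcal{O}_FG\cdot a\subseteq A_h$; dualizing and invoking $A_h^{*}=A_h$ yields $A_h\subseteq(\mathcal{O}_FG\cdot a)^{*}=\mathcal{O}_FG\cdot a$, giving equality. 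The main technical obstacle I anticipate is justifying the explicit description of $(\mathcal{O}_FG\cdot a)^{*}$: one must identify $\mathbf{r}_G(F_h)\subseteq F^cG$ as a free rank-one $FG$-module (a consequence of the normal basis theorem for Galois algebras) and carefully track the effect of the involution $[-1]$ on the resolvend identity.
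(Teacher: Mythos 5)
Your proof is correct. The paper's own ``proof'' of Proposition~\ref{NBG} is only a pointer to \cite[Proposition 1.8 and (2.11)]{M} for (a)--(b) and to \cite[Proposition 3.10]{T} for (c), so there is no in-text argument to compare against; what you have supplied is a self-contained version of the standard reasoning behind those references. For (a), the facts that $\mathbf{r}_G$ is an $F^cG$-module isomorphism intertwining the $G$-action on $\mbox{Map}(G,F^c)$ with left multiplication on $F^cG$, and that one may descend along the flat extension $F\hookrightarrow F^c$, give exactly what is needed. For (c), the three ingredients you isolate --- the resolvend--trace identity
\[
\mathbf{r}_G(a)\,\mathbf{r}_G(b)^{[-1]}=\sum_{u\in G}\mbox{Tr}_{F_h/F}\bigl(a\,(u\cdot b)\bigr)\,u,
\]
the dual computation $(\mathcal{O}_FG\cdot a)^{*}=\mathcal{O}_FG\cdot e^{-1}a$ once $\mathbf{r}_G(a)\in(F^cG)^{\times}$ (with $e:=\mathbf{r}_G(a)\mathbf{r}_G(a)^{[-1]}$), and the self-duality $A_h^{*}=A_h$ coming from $A_h^{2}=\mathfrak{D}_{F_h/F}^{-1}$ --- are precisely the right ones, and both directions of your deduction via lattice duality are sound, including extracting $e\in(FG)^{\times}$ from $e\in FG\cap(F^cG)^{\times}$ in the forward direction.

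One inaccuracy in your sketch of (b): over a local $F$, the module $\mathcal{O}_h$ is $\mathcal{O}_FG$-free if and only if $h$ is \emph{tame} (Noether's criterion), not unramified, so the parenthetical ``and to $\mathcal{O}_h$ being $\mathcal{O}_FG$-locally free'' is false. It is also unnecessary: the characterization you state first, that $h$ is unramified precisely when $\mathcal{O}_h\otimes_{\mathcal{O}_F}\mathcal{O}_{F^c}\cong\mbox{Map}(G,\mathcal{O}_{F^c})$, is the correct one, and it is exactly what $\mathbf{r}_G(a)\in(\mathcal{O}_{F^c}G)^{\times}$ delivers in the reverse direction. A tame but ramified $h$ has $\mathcal{O}_h$ free with a generator whose resolvend lies in $(F^cG)^{\times}$ but not in $(\mathcal{O}_{F^c}G)^{\times}$, which is precisely the extra integrality that (b) is tracking.
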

\begin{proof}See \cite[Proposition 1.8 and (2.11)]{M} for (a) and (b), and \cite[Proposition 3.10]{T} for (c).
\end{proof}

We are interested in giving a description of the resolvends $\mathbf{r}_G(a)$ for which $A_h=\mathcal{O}_FG\cdot a$ for a wildly and weakly ramified $h\in\mbox{Hom}(\Omega_F,G)$ when $F$ is a finite extension of $\mathbb{Q}_p$. The next proposition will be a crucial tool; \mbox{it will allow} us to reduce to the case when $F^{h}/F$ is totally ramified.

\begin{prop}\label{factor} Let $F$ be a finite extension of $\mathbb{Q}_p$ and let $h\in\mbox{Hom}(\Omega_F,G)$. 
\begin{enumerate}[(a)]
\item There exists a factorization $h=h^{nr}h^{tot}$ of $h$, with $h^{nr},h^{tot}\in\mbox{Hom}(\Omega_F,G)$, such that $h^{nr}$ is unramified and $F^{h^{tot}}/F$ is totally ramified. Furthermore, if $h$ is wildly and weakly ramified, then so is $h^{tot}$.
\item Assume that $h$ is weakly ramified and let $h=h^{nr}h^{tot}$ be given as in (a). If $\mathcal{O}_{h^{nr}}=\mathcal{O}_FG\cdot a_{nr}$ and $A_{h^{tot}}=\mathcal{O}_FG\cdot a_{tot}$, then there exists $a\in A_h$ such that $A_h=\mathcal{O}_{F}G\cdot a$ and $\mathbf{r}_G(a)=\mathbf{r}_G(a_{nr})\mathbf{r}_G(a_{tot})$.
\end{enumerate}
\end{prop}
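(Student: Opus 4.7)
First I would handle (a) by exhibiting $h^{nr}$ and $h^{tot}$ explicitly. Fix a Frobenius lift $\phi\in\Omega_F$, so that $\Omega_F$ is topologically generated by the inertia subgroup $I_F$ together with $\phi$. Let $h^{nr}\in\mbox{Hom}(\Omega_F,G)$ be the unramified homomorphism with $h^{nr}|_{I_F}\equiv 1$ and $h^{nr}(\phi)=h(\phi)$, and set $h^{tot}:=h(h^{nr})^{-1}$. Since $G$ is abelian, a direct computation gives $h^{tot}|_{I_F}=h|_{I_F}$ and $h^{tot}(\phi)=1$, so the image of $h^{tot}$ coincides with $h(I_F)=h^{tot}(I_F)$, forcing $F^{h^{tot}}/F$ to be totally ramified. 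To transfer \emph{wildly and weakly ramified} from $h$ to $h^{tot}$ I would pass to the upper-numbering filtration $\{\Omega_F^u\}$: for $u>0$ one has $\Omega_F^u\subset I_F$, so $h(\Omega_F^u)=h^{tot}(\Omega_F^u)$, giving $\mbox{Gal}(F^h/F)^u=\mbox{Gal}(F^{h^{tot}}/F)^u$ as subgroups of $G$ for every $u\geq 0$. Hasse--Arf (both extensions being abelian) then translates this equality into the lower-numbering conditions defining wildly weakly ramified.

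For (b), since $\mathbf{r}_G$ is an $F^c$-linear bijection $\mbox{Map}(G,F^c)\to F^cG$, the prescribed equation determines a unique $a\in\mbox{Map}(G,F^c)$. My plan is first to verify $a\in F_h$ via the criterion (\ref{resol1}): applying $\omega\in\Omega_F$ factor by factor and using that $h^{nr}(\omega),h^{tot}(\omega)\in G$ are central in $F^cG$ because $G$ is abelian yields $\omega\cdot\mathbf{r}_G(a)=\mathbf{r}_G(a)h(\omega)$. Next, I would establish the unit condition $\mathbf{r}_G(a)\mathbf{r}_G(a)^{[-1]}\in(\mathcal{O}_FG)^\times$ required by Proposition~\ref{NBG}(c). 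Commutativity of $F^cG$ allows me to write
\[
\mathbf{r}_G(a)\mathbf{r}_G(a)^{[-1]}=\bigl(\mathbf{r}_G(a_{nr})\mathbf{r}_G(a_{nr})^{[-1]}\bigr)\bigl(\mathbf{r}_G(a_{tot})\mathbf{r}_G(a_{tot})^{[-1]}\bigr);
\]
the second factor lies in $(\mathcal{O}_FG)^\times$ by Proposition~\ref{NBG}(c) applied to $a_{tot}$, and the first factor is a unit in $\mathcal{O}_{F^c}G$ by Proposition~\ref{NBG}(b) and is $\Omega_F$-fixed by a direct check with (\ref{resol1}), hence lies in $(\mathcal{O}_FG)^\times$.

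The main obstacle I anticipate is proving $a\in A_h$, i.e.\ that $a$ takes values in the fractional ideal $A^h\subset L:=F^h$. The issue is that the convolution $a(s)=\sum_{s_1s_2=s}a_{nr}(s_1)a_{tot}(s_2)$ naturally sits in the compositum $M:=L^{nr}L^{tot}$, where $L^{nr}:=F^{h^{nr}}$ and $L^{tot}:=F^{h^{tot}}$, and in general $L\subsetneq M$. My plan is: (i) note $L\subset M$ from $\ker h\supset\ker h^{nr}\cap\ker h^{tot}$, while $a\in F_h$ forces $a(s)\in L$; (ii) because $L^{nr}/F$ is unramified and $L^{tot}/F$ is totally ramified they are linearly disjoint over $F$, the extension $M/L^{nr}$ is the base change of $L^{tot}/F$, and $e(M/F)=e(L^{tot}/F)=|h(I_F)|=e(L/F)$, so $M/L$ is unramified; (iii) deduce $\mathfrak{D}_{M/F}=\mathfrak{D}_{L/F}\mathcal{O}_M$, whence the square root $A'$ of the inverse different of $M/F$ satisfies $A'=A^h\mathcal{O}_M$; and (iv) combine $a(s)\in\mathcal{O}_{L^{nr}}\cdot A^{h^{tot}}=A'$ with $a(s)\in L$ and the identity $A'\cap L=A^h$ (valid because $e(M/L)=1$) to obtain $a(s)\in A^h$. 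Proposition~\ref{NBG}(c) then yields $A_h=\mathcal{O}_FG\cdot a$.
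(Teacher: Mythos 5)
Your argument is correct. The paper itself gives no proof here --- it simply cites \cite[Propositions 9.2 and 5.3]{T} --- and what you have written is essentially a faithful reconstruction of those arguments: part (a) is McCulloh's standard splitting of $h$ along a Frobenius lift (with the transfer of weak/wild ramification via the upper-numbering filtration, which is preserved under the quotients $\Omega_F\twoheadrightarrow h(\Omega_F),\,h^{tot}(\Omega_F)$ and determines the lower-numbering filtration), and part (b) follows the same route as \cite[Proposition 5.3]{T}: define $a$ by the resolvend product, check membership in $F_h$ and the unit criterion of Proposition~\ref{NBG}(c) using commutativity of $F^cG$, and reduce the remaining issue to showing the convolution values land in $A^h$, which your steps (i)--(iv) handle correctly since $M/L$ and $M/L^{tot}$ are unramified, so $A^h\mathcal{O}_M$ equals the square root of the inverse different of $M/F$ and contracts back to $A^h$ on $L$.
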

\begin{proof}See \cite[Propositions 9.2 and 5.3]{T}.
\end{proof}

\subsection{Cohomology and Reduced Resolvends}\label{reduced}We will define reduced resolvends and explain how to interpret them as \mbox{functions on characters of $G$.}

Recall that $\Omega_F$ acts trivially on $G$ on the left. Define
\[
\mathcal{H}(FG):=((F^{c}G)^{\times}/G)^{\Omega_F}.
\]
Given a coset $\mathbf{r}_G(a)G\in\mathcal{H}(FG)$, we will denote it by $r_G(a)$, called the \emph{reduced resolvend of $a$}. Now, taking $\Omega_F$-cohomology of the short exact sequence
\begin{equation}\label{es1}
\begin{tikzcd}[column sep=1cm, row sep=1.5cm]
1 \arrow{r} &
G \arrow{r} &
(F^{c}G)^{\times} \arrow{r} &
(F^{c}G)^{\times}/G \arrow{r}&
1
\end{tikzcd}
\end{equation}
yields the exact sequence
\begin{equation}\label{es2}
\begin{tikzcd}[column sep=1cm, row sep=1.5cm]
1 \arrow{r} &
G \arrow{r} &
(FG)^{\times} \arrow{r}{rag} &
\mathcal{H}(FG) \arrow{r}{\delta}&
\mbox{Hom}(\Omega_F,G) \arrow{r}&
1,
\end{tikzcd}
\end{equation}
where exactness on the right follows from the fact that $H^1(\Omega_F,(F^cG)^\times)=1$, which is Hilbert's Theorem 90. Alternatively, a coset $\mathbf{r}_{G}(a)G\in\mathcal{H}(FG)$ is in the preimage of $h\in\mbox{Hom}(\Omega_F,G)$ under $\delta$ if and only if
\[
h(\omega)=\mathbf{r}_G(a)^{-1}(\omega\cdot\mathbf{r}_{G}(a))
\hspace{1cm}\mbox{for all }\omega\in\Omega_F,
\]
which is equivalent to $F_{h}=FG\cdot a$ by (\ref{resol1}) and Proposition~\ref{NBG} (a). Because there always exists an element $a\in F_h$ for which $F_h=FG\cdot a$ by the Normal Basis Theorem, the map $\delta$ is indeed surjective. 

The argument above also shows that
\[
\mathcal{H}(FG)=\{r_{G}(a)\mid F_{h}=FG\cdot a\mbox{ for some }h\in\mbox{Hom}(\Omega_F,G)\}.
\]
Similarly, we may define
\[
\mathcal{H}(\mathcal{O}_FG):=((\mathcal{O}_{F^c}G)^\times/G)^{\Omega_F}.
\]
Then, the argument above together with Proposition~\ref{NBG} (b) imply that
\begin{equation}\label{rrunram}
\mathcal{H}(\mathcal{O}_FG)=\left\lbrace r_G(a)\,\middle\vert\,
\begin{array}{@{}c@{}c}
 \mathcal{O}_h=\mathcal{O}_FG\cdot a\mbox{ for some}\\
\mbox{unramified $h\in\mbox{Hom}(\Omega_F,G)$}
\end{array}
\right\rbrace.
\end{equation}

To view reduced resolvends as functions on characters of $G$, define 
\[
\det:\mathbb{Z}\widehat{G}\longrightarrow\widehat{G};\hspace{1em}\det\Bigg(\sum_{\chi} n_{\chi}\chi\Bigg):=\prod_{\chi}\chi^{n\chi}
\]
and set $S_{\widehat{G}}:=\ker(\det)$. By applying the functor $\mbox{Hom}(-,(F^{c})^{\times})$ to the short exact sequence
\[
\begin{tikzcd}[column sep=1cm, row sep=1.5cm]
0 \arrow{r} &
S_{\widehat{G}} \arrow{r} &
\mathbb{Z}\widehat{G}\arrow{r}[font=\normalsize, auto]{\det} &
\widehat{G} \arrow{r} &
1,
\end{tikzcd}
\]
we obtain the short exact sequence
\begin{equation}\label{es3}
\begin{tikzcd}[column sep=0.45cm, row sep=1.5cm]
1 \arrow{r} &
\mbox{Hom}(\widehat{G},(F^{c})^{\times}) \arrow{r} &
\mbox{Hom}(\mathbb{Z}\widehat{G},(F^{c})^{\times}) \arrow{r}&
\mbox{Hom}(S_{\widehat{G}},(F^{c})^{\times}) \arrow{r}&
1,
\end{tikzcd}
\end{equation}
where exactness on the right follows from the fact that $(F^{c})^{\times}$ is divisible and thus injective. We will identify (\ref{es3}) with (\ref{es1}) as follows.

First, observe that we have canonical identifications
\begin{equation}\label{iden1}
(F^{c}G)^{\times}=\mbox{Map}(\widehat{G},(F^{c})^{\times})
=\mbox{Hom}(\mathbb{Z}\widehat{G},(F^c)^\times).
\end{equation}
The second identification is given by extending the maps $\widehat{G}\longrightarrow(F^c)^\times$ via $\mathbb{Z}$-linearity, and the first identification is induced by characters on $G$ as follows. Each resolvend $\mathbf{r}_{G}(a)\in (F^cG)^\times$ gives rise to a map $\mbox{Map}(\widehat{G},(F^c)^\times)$ given by
\begin{equation}\label{resolvent}
\mathbf{r}_G(a)(\chi):=\sum_{s\in G}a(s)\chi(s)^{-1}\hspace{1cm}\mbox{for }\chi\in\widehat{G}.
\end{equation}
Conversely, given $\varphi\in\mbox{Map}(\widehat{G},(F^c)^\times)$, one recovers  $\mathbf{r}_{G}(a)$ by the formula
\[
a(s):=\frac{1}{|G|}\sum_{\chi}\varphi(\chi)\chi(s)\hspace{1cm}\mbox{for }s\in G.
\]
Since $G=\mbox{Hom}(\widehat{G},(F^{c})^{\times})$ canonically, the third terms in (\ref{es1}) and (\ref{es3}) are naturally identified as well. Taking $\Omega_F$-invariants, we then obtain
\begin{equation}\label{idenH}
\mathcal{H}(FG)=\mbox{Hom}_{\Omega_F}(S_{\widehat{G}},(F^c)^\times).
\end{equation}
Finally, given $c\in (FG)^\times$, we will write $rag(c)$ for its image in $\mathcal{H}(FG)$ under the map $rag$ in (\ref{es2}).

\subsection{The Modified Stickelberger Transpose}\label{StickelTranspose}\label{StickelS} In this subsection, assume further that $G$ has odd order.  Recall from Section~\ref{notation} that we chose a compatible set $\{\zeta_n:n\in\mathbb{Z}^+\}$ of primitive roots of unity in $F^c$. 

\begin{definition}\label{Stickel}For each $\chi\in\widehat{G}$ and $s\in G$, let $
\upsilon(\chi,s)\in \left[\frac{1-|s|}{2},\frac{|s|-1}{2}\right]$
denote the unique integer (note that $|s|$ is odd because $G$ has odd order) such that $\chi(s)=(\zeta_{|s|})^{\upsilon(\chi,s)}$, and define $\langle\chi,s\rangle_{*}:=\upsilon(\chi,s)/|s|$. Extending this definition by $\mathbb{Q}$-linearity, we obtain a pairing $\langle\hspace{1mm},\hspace{1mm}\rangle_*:\mathbb{Q}\widehat{G}\times\mathbb{Q}G\longrightarrow\mathbb{Q}$. The map
\[
\Theta_{*}:\mathbb{Q}\widehat{G}\longrightarrow\mathbb{Q}G;
\hspace{1em}
\Theta_{*}(\psi):=\sum_{s\in G}\langle\psi,s\rangle_{*}s
\]
is called the \emph{modified Stickelberger map}.
\end{definition}

\begin{prop}\label{A-ZG}
Given $\psi\in\mathbb{Z}\widehat{G}$, we have $\Theta_{*}(\psi)\in\mathbb{Z}G$ if and only if $\psi\in S_{\widehat{G}}$.
\end{prop}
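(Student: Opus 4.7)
The plan is to show that both conditions unpack, via the definition of $\upsilon(\chi,s)$, to exactly the same family of congruences indexed by elements $s \in G$, so that the equivalence is immediate.

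Write $\psi = \sum_{\chi} n_{\chi}\chi \in \mathbb{Z}\widehat{G}$. First I would interpret $\psi \in S_{\widehat{G}} = \ker(\det)$ pointwise: $\psi \in S_{\widehat{G}}$ means $\prod_{\chi}\chi^{n_{\chi}}$ is the trivial character on $G$, i.e.\ $\prod_{\chi}\chi(s)^{n_{\chi}} = 1$ for every $s \in G$. Substituting $\chi(s) = \zeta_{|s|}^{\upsilon(\chi,s)}$ from Definition~\ref{Stickel}, this condition is equivalent to
\[
\zeta_{|s|}^{\sum_{\chi} n_{\chi}\upsilon(\chi,s)} = 1 \quad\text{for all } s \in G,
\]
which, since $\zeta_{|s|}$ has order exactly $|s|$, is equivalent to $|s|$ dividing $\sum_{\chi} n_{\chi}\upsilon(\chi,s)$ for every $s \in G$.

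Next I would unpack the other side. By $\mathbb{Q}$-linearity of $\langle\,,\,\rangle_{*}$, we have
\[
\langle \psi, s\rangle_{*} = \frac{1}{|s|}\sum_{\chi} n_{\chi}\upsilon(\chi,s),
\]
so $\Theta_{*}(\psi) = \sum_{s \in G} \langle \psi, s\rangle_{*}s$ belongs to $\mathbb{Z}G$ if and only if each coefficient $\langle \psi, s\rangle_{*}$ is an integer, which is exactly the condition that $|s|$ divides $\sum_{\chi} n_{\chi}\upsilon(\chi,s)$ for every $s \in G$.

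Comparing the two paragraphs, both $\psi \in S_{\widehat{G}}$ and $\Theta_{*}(\psi) \in \mathbb{Z}G$ are equivalent to the same set of divisibility conditions, and the proposition follows. There is no real obstacle here; the only thing to be careful about is the bijective correspondence between roots of unity of order $|s|$ and residues modulo $|s|$, which is what converts the multiplicative condition defining $S_{\widehat{G}}$ into the additive divisibility condition characterizing integrality of $\Theta_{*}(\psi)$.
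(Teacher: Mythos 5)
Your proof is correct and is essentially the standard argument: both conditions, once unpacked through $\chi(s)=\zeta_{|s|}^{\upsilon(\chi,s)}$, reduce to the same divisibility $|s|\mid\sum_\chi n_\chi\upsilon(\chi,s)$ for each $s\in G$. The paper defers to \cite[Proposition 8.2]{T}, which proceeds by the same definitional unwinding, so there is nothing further to compare.
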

\begin{proof}See \cite[Proposition 8.2]{T}.
\end{proof}

Note that $\Omega_F$ acts on $\widehat{G}$  (on the left) canonically via its action \mbox{on the roots} of unity in $F^c$, and recall that $\Omega_F$ acts trivially on $G$ by definition. \mbox{Below, we} define other $\Omega_F$-actions on $G$, one of which will make the $\mathbb{Q}$-linear map $\Theta_*$ preserve the $\Omega_F$-action.

\begin{definition}\label{cyclotomic}
Let $m:=\exp(G)$ and let $\mu_m$ be the group of $m$-th roots of unity in $F^c$. The \emph{$m$-th cyclotomic character of $\Omega_F$} is the homomorphism 
\[
\kappa:\Omega_F\longrightarrow(\mathbb{Z}/m\mathbb{Z})^{\times}
\]
defined by the equations
\[
\omega(\zeta)=\zeta^{\kappa(\omega)}\hspace{1cm}\mbox{for $\omega\in\Omega_F$ and }\zeta\in\mu_m.
\]
For $n\in\mathbb{Z}$, let $G(n)$ be the group $G$ equipped with the $\Omega_F$-action given by
\[
\omega\cdot s:=s^{\kappa(\omega^{n})}\hspace{1cm}\mbox{for $s\in G$ and $\omega\in\Omega_F$}.
\]
\end{definition}

\begin{prop}\label{eqvariant}
The map $\Theta_{*}:\mathbb{Q}\widehat{G}\longrightarrow\mathbb{Q}G(-1)$ preserves $\Omega_F$-action.
\end{prop}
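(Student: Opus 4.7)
The plan is to verify the $\Omega_F$-equivariance directly on characters and then extend by $\mathbb{Q}$-linearity. Since $\Theta_*$ is $\mathbb{Q}$-linear and both sides of the desired equality $\Theta_*(\omega\cdot\psi)=\omega\cdot\Theta_*(\psi)$ are $\mathbb{Q}$-linear in $\psi$, it suffices to fix $\omega\in\Omega_F$ and a single character $\chi\in\widehat{G}$, and check that $\Theta_*(\omega\cdot\chi)=\omega\cdot\Theta_*(\chi)$ in $\mathbb{Q}G(-1)$.

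First I would compute the right-hand side. By definition of the $G(-1)$-action,
\[
\omega\cdot\Theta_*(\chi)=\sum_{s\in G}\langle\chi,s\rangle_*\, s^{\kappa(\omega^{-1})}.
\]
Because $\kappa(\omega)\in(\mathbb{Z}/m\mathbb{Z})^\times$ and $m=\exp(G)$, the map $s\mapsto s^{\kappa(\omega^{-1})}$ is a bijection of $G$ (its inverse being $t\mapsto t^{\kappa(\omega)}$), so I can reindex by $t=s^{\kappa(\omega^{-1})}$ to get
\[
\omega\cdot\Theta_*(\chi)=\sum_{t\in G}\langle\chi,t^{\kappa(\omega)}\rangle_*\, t.
\]
Meanwhile, the left-hand side is $\Theta_*(\omega\cdot\chi)=\sum_{t}\langle\omega\cdot\chi,t\rangle_*\, t$. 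Thus the whole problem reduces to proving the identity of integers $\upsilon(\omega\cdot\chi,t)=\upsilon(\chi,t^{\kappa(\omega)})$ for every $t\in G$, after noting that $|t^{\kappa(\omega)}|=|t|$ since $\kappa(\omega)$ is a unit mod $m$, so the denominators in $\langle\,,\,\rangle_*$ agree.

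To check this integer identity I would compute both sides as powers of $\zeta_{|t|}$. On the one hand, $(\omega\cdot\chi)(t)=\omega(\chi(t))=\omega(\zeta_{|t|}^{\upsilon(\chi,t)})=\zeta_{|t|}^{\kappa(\omega)\upsilon(\chi,t)}$ by the definition of $\kappa$. On the other hand, $\chi(t^{\kappa(\omega)})=\chi(t)^{\kappa(\omega)}=\zeta_{|t|}^{\kappa(\omega)\upsilon(\chi,t)}$. Hence $\upsilon(\omega\cdot\chi,t)$ and $\upsilon(\chi,t^{\kappa(\omega)})$ both represent $\kappa(\omega)\upsilon(\chi,t)$ modulo $|t|$. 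Since $G$ has odd order, each of them is by definition the \emph{unique} integer in the symmetric range $[(1-|t|)/2,(|t|-1)/2]$ with this residue, so they must coincide, completing the proof.

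The only subtle point — and where I would concentrate the verification — is making sure that the reindexing substitution uses exactly the right twist (namely $\kappa(\omega^{-1})$ on the $G$-side corresponds to multiplication by $\kappa(\omega)$ on the exponents of $\chi$-values, which cancels the $\kappa(\omega)$ produced by $\omega$ acting on the root of unity $\chi(t)$); this cancellation is precisely the reason the Stickelberger map lands in $G(-1)$ rather than in $G$ or $G(1)$. Everything else is formal, and the oddness of $|G|$ is needed only to guarantee that $\upsilon(\chi,s)$ is well-defined as an integer in the symmetric range.
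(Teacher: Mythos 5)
Your argument is correct and is exactly the standard computation behind the cited result \cite[Proposition 8.4]{T}: reindex the sum via the bijection $s\mapsto s^{\kappa(\omega^{-1})}$ and reduce to the identity $\upsilon(\omega\cdot\chi,t)=\upsilon(\chi,t^{\kappa(\omega)})$, which follows from uniqueness of the representative in the symmetric range (using that $|G|$ is odd). Nothing is missing; the paper simply outsources this verification to the reference.
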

\begin{proof}See \cite[Proposition 8.4]{T}.
\end{proof}

From Propositions~\ref{A-ZG} and \ref{eqvariant}, the map $\Theta_*$ restricts to an $\Omega_F$-equivariant map $S_{\widehat{G}}\longrightarrow\mathbb{Z}G(-1)$. Applying the functor $\mbox{Hom}(-,(F^c)^\times)$, we then obtain an $\Omega_F$-equivariant homomorphism
\[
\Theta_{*}^{t}:\mbox{Hom}(\mathbb{Z}G(-1),(F^{c})^{\times})\longrightarrow\mbox{Hom}(S_{\widehat{G}},(F^{c})^{\times});\hspace{1em}f\mapsto f\circ\Theta_*.
\]
Taking $\Omega_F$-invariants, this yields a homomorphism
\[
\Theta^t_{*}:\mbox{Hom}_{\Omega_F}(\mathbb{Z}G(-1),(F^{c})^{\times})\longrightarrow\mbox{Hom}_{\Omega_F}(S_{\widehat{G}},(F^{c})^{\times}),
\]
called the \emph{modified Stickelberger transpose}. To simplify notation, define
\begin{equation}\label{lambda}
\Lambda(FG):=\mbox{Map}_{\Omega_F}(G(-1),F^c).
\end{equation}
Since there is a natural identification $\Lambda(FG)^\times=
\mbox{Hom}_{\Omega_F}(\mathbb{Z}G(-1),(F^c)^\times)$, we may regard $\Theta_{*}^{t}$ as a homomorphism $\Lambda(FG)^{\times}\longrightarrow\mathcal{H}(FG)$ (recall (\ref{idenH})).

\section{Valuations of Local Wild Resolvents}\label{computeval}

In order to prove Theorem~\ref{decomp}, we will first prove the following.

\begin{thm}\label{units}Let $F$ be a finite extension of $\mathbb{Q}_p$ and let $h\in\mbox{Hom}(\Omega_F,G)$ be wildly and weakly ramified. If $A_h=\mathcal{O}_FG\cdot a$  (cf. Proposition~\ref{Aexists}), then 
\begin{equation}\label{resolventdef}
(a\mid\chi):=\sum_{s\in G}a(s)\chi(s)^{-1}
\end{equation}
(cf. (\ref{resolvent})) is element of $\mathcal{O}_{F^c}^\times$ for all $\chi\in\widehat{G}$.
\end{thm}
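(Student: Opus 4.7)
The plan combines the characterization of normal integral bases in Proposition~\ref{NBG}(c), the reduction to the totally ramified case provided by Proposition~\ref{factor}, and an explicit estimate of local Lagrange-type resolvents. By Proposition~\ref{NBG}(c), the hypothesis $A_h = \mathcal{O}_F G\cdot a$ gives $\mathbf{r}_G(a)\mathbf{r}_G(a)^{[-1]} \in (\mathcal{O}_F G)^\times$. Under the canonical identification (\ref{iden1}), the involution $[-1]$ on $F^c G$ corresponds to precomposition with the inversion $\chi\mapsto\chi^{-1}$ on $\widehat{G}$, so this unit condition translates to $(a\mid\chi)(a\mid\chi^{-1}) \in \mathcal{O}_{F^c}^\times$ for every $\chi\in\widehat{G}$; equivalently, $v_{F^c}((a\mid\chi)) = -v_{F^c}((a\mid\chi^{-1}))$. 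It therefore suffices to establish the lower bound $v_{F^c}((a\mid\chi))\geq 0$ for every $\chi$.

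Next, I would reduce to the case where $F^h/F$ is totally ramified via Proposition~\ref{factor}(a): write $h = h^{nr}h^{tot}$ with $h^{nr}$ unramified and $F^{h^{tot}}/F$ totally (and, by the last clause of that proposition, wildly and weakly) ramified, and choose $\mathcal{O}_{h^{nr}} = \mathcal{O}_F G\cdot a_{nr}$ and $A_{h^{tot}} = \mathcal{O}_F G\cdot a_{tot}$. By Proposition~\ref{factor}(b) there is a generator $a'$ of $A_h$ with $\mathbf{r}_G(a') = \mathbf{r}_G(a_{nr})\mathbf{r}_G(a_{tot})$. Since $A_h$ is free of rank one over $\mathcal{O}_F G$ in our local setting, any other generator has the form $a = u\cdot a'$ with $u\in(\mathcal{O}_F G)^\times$, giving $\mathbf{r}_G(a) = u\,\mathbf{r}_G(a')$; as $u$'s character values already lie in $\mathcal{O}_{F^c}^\times$, the valuations of $(a\mid\chi)$ and $(a'\mid\chi)$ coincide. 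Hence I may replace $a$ by $a'$, so $(a\mid\chi) = (a_{nr}\mid\chi)(a_{tot}\mid\chi)$. Proposition~\ref{NBG}(b) forces $\mathbf{r}_G(a_{nr}) \in (\mathcal{O}_{F^c}G)^\times$, hence $(a_{nr}\mid\chi)\in\mathcal{O}_{F^c}^\times$, and the problem reduces to proving $(a_{tot}\mid\chi)\in\mathcal{O}_{F^c}$ for every $\chi$.

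For this totally ramified case, Proposition~\ref{Aexists} identifies $G$ with the elementary $p$-abelian group $\mbox{Gal}(F^{h^{tot}}/F) = \mbox{Gal}(F^{h^{tot}}/F)_0 = \mbox{Gal}(F^{h^{tot}}/F)_1$ and gives $v_{F^{h^{tot}}}(A^{h^{tot}}) = 1-|G|$. Writing $a_{tot}(s) = s^{-1}(\tilde a)$ for a normal integral basis element $\tilde a$ of $A^{h^{tot}}$ over $\mathcal{O}_F G$, the expression $(a_{tot}\mid\chi) = \sum_{s\in G}s^{-1}(\tilde a)\chi(s)^{-1}$ is a local Lagrange resolvent whose individual summands have valuation as low as $v_{F^{h^{tot}}}(A^{h^{tot}}) < 0$. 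The main obstacle, and the heart of the theorem, is to extract the cancellation that raises the valuation of the whole sum up to $0$: I would attempt this by picking $\tilde a$ in a rigid form dictated by the lower-numbering filtration, exploiting that $G$ is elementary $p$-abelian with $G_1 = G$ and $G_2 = 1$, and reducing the sum modulo the maximal ideal of $\mathcal{O}_{F^c}$ to an $\mathbb{F}_p$-valued character sum on a vector space, whose vanishing on nontrivial characters should boost the valuation by exactly the required amount. Combined with the identity from the first paragraph, this proves $v_{F^c}((a\mid\chi)) = 0$, i.e., $(a\mid\chi)\in\mathcal{O}_{F^c}^\times$ for every $\chi$.
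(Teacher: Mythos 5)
Your first two paragraphs track the paper's own reduction accurately. The observation that Proposition~\ref{NBG}(c) gives $(a\mid\chi)(a\mid\chi^{-1})\in\mathcal{O}_{F^c}^\times$ (this is exactly Lemma~\ref{prelimval}), the factorization $h=h^{nr}h^{tot}$, the passage from $a$ to $a'=a_{nr}\cdot a_{tot}$ via a unit $\gamma\in(\mathcal{O}_FG)^\times$, and the disposal of the unramified factor via Proposition~\ref{NBG}(b) and Noether's theorem are all correct and are what the paper does. One small slip: you write that Proposition~\ref{Aexists} ``identifies $G$ with $\mbox{Gal}(F^{h^{tot}}/F)$''; in fact $\mbox{Gal}(F^{h^{tot}}/F)\simeq h^{tot}(\Omega_F)$, which is only a subgroup of $G$, and the resolvent $(a_{tot}\mid\chi)$ is a sum over $h^{tot}(\Omega_F)$ because $a_{tot}$ vanishes off that subgroup.

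The genuine gap is the third paragraph, which is precisely where the theorem's content lives and which you leave as a plan rather than a proof. ``Picking $\tilde a$ in a rigid form dictated by the filtration and reducing the sum modulo the maximal ideal to an $\mathbb{F}_p$-valued character sum on a vector space'' is not an argument: the summands $s^{-1}(\tilde a)\chi(s)^{-1}$ have valuations as negative as $1-|h^{tot}(\Omega_F)|$, so there is no single residue field to reduce to, and you give no mechanism for the cancellation you hope for. The paper handles this (Proposition~\ref{valtotram}) by first invoking the explicit normal integral basis of Johnston~\cite{HJ}, then splitting into cases: when $[F(\zeta_p):F]$ is even the result is already~\cite[Theorem 15.4]{T}, and the new case ($p$ and $[F(\zeta_p):F]$ both odd) is settled by a contradiction argument whose engine is a valuation lower bound for the Gauss sums $G(\varphi,j)=\sum_{k\in\mathbb{F}_p}\varphi(k)\zeta^{jk}$ (Proposition~\ref{GaussVal}) together with the expansion $\sum_{l=1}^{n-1}G(\varphi^l,j)=1+n\sum_{k\in R_n}\zeta^{jk}$ (Proposition~\ref{gG}), culminating in the impossible inequality $0\geq p^r(e-1)+1$. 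None of this apparatus, nor any substitute for it, appears in your sketch, so as written the proposal does not prove the theorem.
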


We note that (\ref{resolventdef}) is called the \emph{resolvent of $a$ at $\chi$}. In the rest of this section, let $F$ be a finite extension of $\mathbb{Q}_p$ and let $\zeta=\zeta_p$ be the chosen primitive $p$-th root of unity in $F^c$. We will also use the following notation and conventions.

\begin{definition}\label{defch5}Let $\mathbb{F}_p:=\mathbb{Z}/p\mathbb{Z}$. For each $i\in\mathbb{F}_p$, if $z$ is an element of order $1$ or $p$ in a group, we will write $z^i$ for $z^{n_i}$, where $n_i\in\mathbb{Z}$ is any \mbox{representative of $i$.} If $i\in\mathbb{F}_p^\times$, we will write $i^{-1}$ for its multiplicative inverse in $\mathbb{F}_p^\times$. If $p$ is odd, we will further define $c(i)\in\left[\frac{1-p}{2},\frac{p-1}{2}\right]$ to be the unique \mbox{integer that represents $i$.}
\end{definition}

\begin{definition}\label{Rn}
Notice that $\mathbb{Q}_p$ contains all $(p-1)$-st roots of unity. We will write $\widehat{\mathbb{F}_p^\times}$ for the group of $\mathbb{Q}_p^\times$-valued characters on $\mathbb{F}_p^\times$. Given $\varphi\in\widehat{\mathbb{F}_p^\times}$, we will extend it to a map on $\mathbb{F}_p$ by setting $\varphi(0)=0$. For each $n\in\mathbb{N}$ which divides $p-1$, let $R_n:=(\mathbb{F}_p^\times)^n$ be the subgroup of $\mathbb{F}_p^\times$ consisting of the non-zero $n$-th powers in $\mathbb{F}_p$. 
\end{definition}

\subsection{Valuations of Gauss Sums over $\mathbb{Q}_p$} We will prove Theorem~\ref{units} by first computing the valuations of the following Gauss sums.

\begin{definition}\label{Gauss}For each $\varphi\in\widehat{\mathbb{F}_p^\times}$ and $j\in\mathbb{F}_p$, define
\[
G(\varphi,j):=\sum_{k\in\mathbb{F}_p}\varphi(k)\zeta^{jk}.
\]
\end{definition}

\begin{lem}\label{GaussBasic}
For all $\varphi\in\widehat{\mathbb{F}_p^\times}$ and $j\in\mathbb{F}_p^\times$, we have 
\begin{enumerate}[(a)]
\item $G(1,0)=p-1$ and $G(\varphi,0)=0$ if $\varphi\neq 1$;
\item $G(\varphi,j)=\varphi(j)^{-1}G(\varphi,1)$ and $G(1,j)=-1$.
\end{enumerate}
\end{lem}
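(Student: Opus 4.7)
The proof is a direct calculation using the convention $\varphi(0)=0$ together with two standard facts: orthogonality of characters on $\mathbb{F}_p^\times$ and the vanishing of $\sum_{l\in\mathbb{F}_p}\zeta^l$. I would handle the four assertions essentially in parallel by unfolding the definition of $G(\varphi,j)$ and making a change of variables.

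For part (a), I would expand $G(\varphi,0)=\sum_{k\in\mathbb{F}_p}\varphi(k)$, observe that $\varphi(0)=0$ by the extension convention in Definition~\ref{Rn}, and then reduce the sum to $\sum_{k\in\mathbb{F}_p^\times}\varphi(k)$. When $\varphi=1$ this plainly equals $p-1$, and when $\varphi\neq 1$ one can pick $k_0\in\mathbb{F}_p^\times$ with $\varphi(k_0)\neq 1$ and use the translation $k\mapsto k_0k$ to conclude that the sum is fixed by multiplication by $\varphi(k_0)$, hence zero.

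For part (b), the second identity is the cleanest: $G(1,j)=\sum_{k\in\mathbb{F}_p^\times}\zeta^{jk}$ by the convention $1(0)=0$, and since $k\mapsto jk$ permutes $\mathbb{F}_p^\times$ when $j\in\mathbb{F}_p^\times$, this equals $\sum_{l\in\mathbb{F}_p^\times}\zeta^l=-1$ using that the full sum $\sum_{l\in\mathbb{F}_p}\zeta^l$ vanishes (minimal polynomial of $\zeta$, or geometric series). For the first identity, I would substitute $k=j^{-1}m$ in $G(\varphi,j)=\sum_{k\in\mathbb{F}_p^\times}\varphi(k)\zeta^{jk}$ to obtain
\[
G(\varphi,j)=\sum_{m\in\mathbb{F}_p^\times}\varphi(j^{-1}m)\zeta^{m}=\varphi(j)^{-1}\sum_{m\in\mathbb{F}_p^\times}\varphi(m)\zeta^m=\varphi(j)^{-1}G(\varphi,1).
\]

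No step here is really an obstacle; the only point that needs a moment of care is keeping track of the convention $\varphi(0)=0$ (so that the sums defining $G(\varphi,j)$ effectively range over $\mathbb{F}_p^\times$), and remembering that the multiplicative substitution $k\mapsto j^{-1}m$ requires $j\in\mathbb{F}_p^\times$, which is exactly the hypothesis in part (b).
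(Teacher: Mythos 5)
Your proof is correct and takes essentially the same route as the paper, which simply invokes orthogonality of characters for (a) and describes (b) as a simple calculation; you have just filled in the standard details (the translation trick for orthogonality, the substitution $k\mapsto j^{-1}m$, and the vanishing of $\sum_{l\in\mathbb{F}_p}\zeta^l$), all of which check out.
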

\begin{proof}The claims in (a) follow from the orthogonality of characters, and both equalities in (b) follow from a simple calculation.
\end{proof}

In view of Lemma~\ref{GaussBasic}, it remains to consider $G(\varphi,1)$ for $\varphi\neq 1$.

\begin{prop}\label{GaussVal}Let $\varphi\in\widehat{\mathbb{F}_p^\times}$ be of order $n\neq 1$. For all $j\in\mathbb{F}_p^\times$, we have
\[
v_{\mathbb{Q}_p(\zeta)}(G(\varphi,j))\geq (p-1)/n.
\]
\end{prop}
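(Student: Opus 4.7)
The plan is to reduce to the case $j=1$ via Lemma~\ref{GaussBasic}(b) and then extract the valuation bound from a multiplicative identity expressing $G(\varphi,1)^n$ in terms of $p$ and Jacobi sums. Since $\varphi(j)^{-1}$ lies in $\mu_{p-1}\subseteq\mathbb{Z}_p^\times$ and is therefore a unit, the relation $G(\varphi,j)=\varphi(j)^{-1}G(\varphi,1)$ yields $v_{\mathbb{Q}_p(\zeta)}(G(\varphi,j))=v_{\mathbb{Q}_p(\zeta)}(G(\varphi,1))$, so it suffices to prove the bound for $j=1$.

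To this end, for characters $\varphi_1,\varphi_2\in\widehat{\mathbb{F}_p^\times}$ I would introduce the Jacobi sum
\[
J(\varphi_1,\varphi_2):=\sum_{k\in\mathbb{F}_p}\varphi_1(k)\varphi_2(1-k),
\]
which is a sum of $(p-1)$-st roots of unity and hence an algebraic integer in $\mathcal{O}_{F^c}$, so $v_{\mathbb{Q}_p(\zeta)}(J(\varphi_1,\varphi_2))\geq 0$. Expanding the double sum $G(\varphi_1,1)G(\varphi_2,1)$ by grouping terms according to the value of the exponent of $\zeta$ and making the standard change of summation variable produces the two classical identities:
\begin{enumerate}[(i)]
\item $G(\varphi_1,1)\,G(\varphi_2,1)=J(\varphi_1,\varphi_2)\,G(\varphi_1\varphi_2,1)$ whenever $\varphi_1\varphi_2\neq 1$;
\item $G(\varphi,1)\,G(\varphi^{-1},1)=\varphi(-1)\,p$ whenever $\varphi\neq 1$.
\end{enumerate}

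Applying (i) inductively---legitimate at each step because $\varphi$ has exact order $n$, so $\varphi^{i+1}\neq 1$ for $1\leq i\leq n-2$---one obtains
\[
G(\varphi,1)^{n-1}=G(\varphi^{n-1},1)\prod_{i=1}^{n-2}J(\varphi,\varphi^i).
\]
Multiplying by $G(\varphi,1)$ and invoking (ii) with $\varphi^{n-1}=\varphi^{-1}$, this becomes
\[
G(\varphi,1)^n=\varphi(-1)\,p\prod_{i=1}^{n-2}J(\varphi,\varphi^i).
\]
Taking $v_{\mathbb{Q}_p(\zeta)}$ of both sides and using $v_{\mathbb{Q}_p(\zeta)}(p)=p-1$ together with the integrality of the Jacobi sums and the fact that $\varphi(-1)$ is a unit, I conclude $n\cdot v_{\mathbb{Q}_p(\zeta)}(G(\varphi,1))\geq p-1$, which gives the desired inequality.

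The only real labor is in verifying identities (i) and (ii) via a change of summation variable, which is classical Gauss sum algebra. I anticipate no genuine obstacle; morally, the entire $p$-adic size of $G(\varphi,1)^n$ is accounted for by the single factor of $p$ arising when $\varphi$ is paired with $\varphi^{-1}$, while the Jacobi sums contribute only nonnegative valuation. In fact this argument also shows that equality holds precisely when each $J(\varphi,\varphi^i)$ is a unit, but only the inequality is needed here.
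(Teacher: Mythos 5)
Your argument is correct but takes a genuinely different route from the paper's. You first reduce to $j=1$ exactly as the paper does, but then you invoke the classical Gauss--Jacobi machinery: the relation $G(\varphi_1,1)G(\varphi_2,1)=J(\varphi_1,\varphi_2)G(\varphi_1\varphi_2,1)$ for $\varphi_1\varphi_2\neq 1$, iterated $n-2$ times and then closed off with $G(\varphi,1)G(\varphi^{-1},1)=\varphi(-1)p$, so that $G(\varphi,1)^n=\varphi(-1)\,p\,\prod_{i=1}^{n-2}J(\varphi,\varphi^i)$ with every $J(\varphi,\varphi^i)$ an algebraic integer. The paper instead avoids Jacobi sums entirely: it forms $S=\sum_{j\in\mathbb{F}_p}G(\varphi,j)^n$ and evaluates $S$ in two ways. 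Expanding the $n$-fold product and summing over $j$ first produces an inner sum $\sum_j\zeta^{j(k_1+\cdots+k_n)}$ equal to $p$ or $0$, giving $v(S)\geq p-1$; on the other hand $G(\varphi,j)=\varphi(j)^{-1}G(\varphi,1)$ and $\varphi^n=1$ give $S=(p-1)G(\varphi,1)^n$, and comparing yields the bound. The two arguments are morally dual: both isolate the single factor of $p$ that the $n$-th power of $G(\varphi,1)$ must contain, your version by pairing $\varphi$ against $\varphi^{-1}$ and the paper's by averaging $G(\varphi,j)^n$ over $j$. Your approach requires establishing (or citing) the two identities (i) and (ii), which is extra work but buys a slightly cleaner multiplicative expression for $G(\varphi,1)^n$ and, as you note, the additional information that equality holds exactly when every $J(\varphi,\varphi^i)$ is a $p$-adic unit. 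The paper's version is shorter and entirely self-contained, needing nothing beyond Lemma~\ref{GaussBasic}. Either is adequate for the application, since only the inequality feeds into Proposition~\ref{valtotram}.
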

\begin{proof}By Lemma~\ref{GaussBasic} (b), it is enough to prove the claim for $j=1$. We will do so by computing the valuation of the sum
\[
S:=\sum_{j\in\mathbb{F}_p}G(\varphi,j)^n.
\]
On one hand, using Definition~\ref{Gauss}, we have
\begin{align*}
S
&=\sum_{j\in\mathbb{F}_p} \sum_{\substack{k_i\in\mathbb{F}_p\\1\leq i\leq n}}\varphi(k_1\cdots k_n)\zeta^{j(k_1+\cdots+k_n)}\\
&=\sum_{\substack{k_i\in\mathbb{F}_p\\1\leq i\leq n}}\varphi(k_1\cdots k_n)\sum_{j\in\mathbb{F}_p}\zeta^{j(k_1+\cdots+k_n)}.
\end{align*}
Since each $\varphi(k_1\cdots k_n)$ is integral and
\[
\sum_{j\in\mathbb{F}_p}\zeta^{j(k_1+\cdots+k_n)}=\begin{cases}
p&\mbox{ if }k_1+\cdots+k_n=0\\
0&\mbox{ otherwise},
\end{cases}
\]
the sum $S$ is the product of $p$ and an element of non-negative valuation, so
\begin{equation}\label{S1}
v_{\mathbb{Q}_p(\zeta)}(S)\geq v_{\mathbb{Q}_p(\zeta)}(p)=p-1.
\end{equation}
On the other hand, notice that $G(\varphi,0)=0$ by Lemma~\ref{GaussBasic} (a) because $\varphi\neq 1$. Using Lemma~\ref{GaussBasic} (b) and the fact that $\varphi$ has order $n$, we then see that
\[
S=\sum_{j\in\mathbb{F}_p^\times}\varphi(j)^{-n}G(\varphi,1)^n=(p-1)G(\varphi,1)^n.
\]
Since $p-1$ has valuation zero, this shows that
\begin{equation}\label{S2}
v_{\mathbb{Q}_p(\zeta)}(S)=n\cdot v_{\mathbb{Q}_p(\zeta)}(G(\varphi,1)).
\end{equation}
The desired inequality now follows from (\ref{S1}) and (\ref{S2}).
\end{proof}

We will also need the following proposition.

\begin{prop}\label{gG}Let $\varphi\in\widehat{\mathbb{F}_p^\times}$ be of order $n\neq 1$. For all $j\in\mathbb{F}_p^\times$, we have
\[
\sum_{l=1}^{n-1}G(\varphi^l,j)=1+n\sum_{k\in R_n}\zeta^{jk}.
\]
\end{prop}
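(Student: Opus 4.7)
The plan is to prove the identity by a direct character-sum manipulation: expand each Gauss sum by its definition, swap the order of summation, and evaluate the resulting inner sum via orthogonality of characters on $\mathbb{F}_p^\times$.

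First, recall that $\varphi(0)=0$ by convention, so $G(\varphi^l,j)=\sum_{k\in\mathbb{F}_p^\times}\varphi^l(k)\zeta^{jk}$. Summing over $l$ and interchanging,
\[
\sum_{l=1}^{n-1}G(\varphi^l,j)=\sum_{k\in\mathbb{F}_p^\times}\zeta^{jk}\sum_{l=1}^{n-1}\varphi^l(k).
\]
Since $\varphi$ has order $n$ and $\mathbb{F}_p^\times$ is cyclic of order $p-1$, the kernel of $\varphi:\mathbb{F}_p^\times\longrightarrow\overline{\mathbb{Q}_p}^\times$ is precisely the unique subgroup of index $n$, namely $R_n=(\mathbb{F}_p^\times)^n$. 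By orthogonality applied to the cyclic group $\langle\varphi\rangle$ of order $n$, the sum $\sum_{l=0}^{n-1}\varphi^l(k)$ equals $n$ if $k\in R_n$ and $0$ otherwise; subtracting the $l=0$ term gives
\[
\sum_{l=1}^{n-1}\varphi^l(k)=
\begin{cases}
n-1 & \text{if }k\in R_n,\\
-1 & \text{if }k\in\mathbb{F}_p^\times\setminus R_n.
\end{cases}
\]

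Plugging this back in and reorganizing,
\[
\sum_{l=1}^{n-1}G(\varphi^l,j)
=n\sum_{k\in R_n}\zeta^{jk}-\sum_{k\in\mathbb{F}_p^\times}\zeta^{jk}.
\]
Finally, since $j\in\mathbb{F}_p^\times$, the full sum $\sum_{k\in\mathbb{F}_p}\zeta^{jk}$ vanishes, so $\sum_{k\in\mathbb{F}_p^\times}\zeta^{jk}=-1$, which yields the claimed expression $1+n\sum_{k\in R_n}\zeta^{jk}$.

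There is no real obstacle here; the only point requiring mild care is identifying $\ker(\varphi)$ with $R_n$, which is immediate from the structure of the cyclic group $\mathbb{F}_p^\times$. Everything else is orthogonality plus the standard identity for a nontrivial additive character sum over $\mathbb{F}_p$.
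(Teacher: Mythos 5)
Your proof is correct and follows essentially the same route as the paper: interchange the order of summation, apply orthogonality of characters to the cyclic group generated by $\varphi$ using $\ker(\varphi)=R_n$, and clean up with the identity $\sum_{k\in\mathbb{F}_p^\times}\zeta^{jk}=-1$. The paper organizes this slightly differently (first computing $\sum_{l=0}^{n-1}G(\varphi^l,j)$ and then subtracting $G(1,j)=-1$, rather than subtracting the $l=0$ term inside the inner sum), but the underlying argument is the same.
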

\begin{proof}First of all, we have
\[
\sum_{l=0}^{n-1}G(\varphi^l,j)
=\sum_{l=0}^{n-1}\sum_{k\in\mathbb{F}_p}\varphi^l(k)\zeta^{jk}
=\sum_{k\in\mathbb{F}_p}\zeta^{jk}\sum_{l=0}^{n-1}\varphi^l(k).
\]
Observe that $\ker(\varphi)=R_n$ because $\varphi$ has order $n$. In particular, we may regard $1,\varphi,\cdots,\varphi^{n-1}$ as the distinct characters on $\mathbb{F}_p^\times/R_n$. By the orthogonality of characters, we see that
\[
\sum_{l=0}^{n-1}\varphi^l(k)=\begin{cases}
n&\mbox{if }k\in R_n\\
0&\mbox{otherwise}.
\end{cases}
\]
It follows that 
\[
\sum_{l=0}^{n-1}G(\varphi^l,j)=n\sum_{k\in R_n}\zeta^{jk}.
\]
Since $G(1,j)=-1$ by Lemma~\ref{GaussBasic} (b), the claim now follows.
\end{proof}

\subsection{Proof of Theorem~\ref{units}} First, we make the following observation, and we will consider the special case when $h\in\mbox{Hom}(\Omega_F,G)$ is wildly and weakly ramified with $F^h/F$ totally ramified (cf. Proposition~\ref{factor}).

\begin{lem}\label{prelimval}Let $h\in\mbox{Hom}(\Omega_F,G)$ be given. If $A_h=\mathcal{O}_FG\cdot a$, then
\[
v_{N}((a\mid\chi^{-1}))=-v_{N}((a\mid\chi))
\]
for all $\chi\in\widehat{G}$. In particular, we have $v_F((a\mid 1))=0$. Here $N/F$ is any finite extension that contains $(a\mid\chi)$ for all $\chi\in\widehat{G}$.
\end{lem}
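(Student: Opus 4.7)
The plan is to deduce this directly from Proposition~\ref{NBG}(c), which asserts that the condition $A_h=\mathcal{O}_FG\cdot a$ is equivalent to $a\in A_h$ together with $\mathbf{r}_G(a)\mathbf{r}_G(a)^{[-1]}\in(\mathcal{O}_FG)^\times$. Since $(\mathcal{O}_FG)^\times\subset(\mathcal{O}_{F^c}G)^\times$, in particular $\mathbf{r}_G(a)\mathbf{r}_G(a)^{[-1]}$ and its inverse both lie in $\mathcal{O}_{F^c}G$. I would then evaluate at an arbitrary character $\chi\in\widehat{G}$ using the identification $(F^cG)^\times=\mathrm{Map}(\widehat{G},(F^c)^\times)$ described in (\ref{iden1}) and (\ref{resolvent}).

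Under that identification, any element $\beta=\sum_s b_s s$ of $\mathcal{O}_{F^c}G$ corresponds to the map $\chi\mapsto\sum_s b_s\chi(s)$, which takes values in $\mathcal{O}_{F^c}$ since each $\chi(s)$ is a root of unity. Applying this to $\beta=\mathbf{r}_G(a)\mathbf{r}_G(a)^{[-1]}$ and its inverse shows that the character values of this product are actually in $\mathcal{O}_{F^c}^\times$. The next step is to compute what happens to the involution $[-1]$ under this identification: writing $\mathbf{r}_G(a)=\sum_s a(s)s^{-1}$, one reads off directly that $\mathbf{r}_G(a)^{[-1]}=\sum_s a(s)s$ corresponds to the map $\chi\mapsto\sum_s a(s)\chi(s)=(a\mid\chi^{-1})$. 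Thus the character evaluation gives
\[
(a\mid\chi)(a\mid\chi^{-1})\in\mathcal{O}_{F^c}^\times\quad\text{for every }\chi\in\widehat{G},
\]
and taking $v_N$ of both sides yields $v_N((a\mid\chi))+v_N((a\mid\chi^{-1}))=0$, which is the required identity.

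For the ``in particular'' assertion, take $\chi=1$, so the identity degenerates to $2v_N((a\mid 1))=0$, hence $v_N((a\mid 1))=0$. It remains only to observe that $(a\mid 1)=\sum_s a(s)$ is actually $\Omega_F$-fixed: because $a\in F_h=\mathrm{Map}_{\Omega_F}({}^hG,F^c)$, we have $\omega(a(s))=a(h(\omega)s)$ for all $\omega\in\Omega_F$, so summing over $s\in G$ and reindexing shows $\omega((a\mid 1))=(a\mid 1)$. Hence $(a\mid 1)\in F$ and $v_F((a\mid 1))=0$.

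There is no real obstacle here; the only bookkeeping to get right is the behaviour of $[-1]$ under the character-value identification, and this is a one-line computation from the definition $\mathbf{r}_G(a)=\sum_s a(s)s^{-1}$. The rest is a purely formal consequence of Proposition~\ref{NBG}(c).
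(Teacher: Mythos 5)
Your argument is correct and is exactly the paper's own proof: both reduce the claim to the identity $(a\mid\chi)(a\mid\chi^{-1})=\bigl(\mathbf{r}_G(a)\mathbf{r}_G(a)^{[-1]}\bigr)(\chi)$, invoke Proposition~\ref{NBG}(c) to place this in $\mathcal{O}_{F^c}^\times$, and take valuations. The only difference is that you spell out the behaviour of $[-1]$ under the character identification and the $\Omega_F$-invariance of $(a\mid 1)$, which the paper leaves implicit.
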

\begin{proof}This follows from the observation that 
\[
(a\mid\chi)(a\mid\chi^{-1})=\mathbf{r}_G(a)\mathbf{r}_G(a)^{[-1]}(\chi)
\]
(cf. (\ref{resolvent}) and recall that $[-1]$ denotes the involution on $F^cG$ induced by the involution $s\mapsto s^{-1}$ on $G$), which lies in $\mathcal{O}_N^\times$ by Proposition~\ref{NBG} (c).
\end{proof}

We note that the next proposition is a generalization of \cite[Theorem 15.4]{T}.

\begin{prop}\label{valtotram}Let $h\in\mbox{Hom}(\Omega_F,G)$ be wildly and weakly ramified and be such that  $F^h/F$ is totally ramified. Then, there exists $a\in A_h$ \mbox{(cf. Proposition} \ref{Aexists}) such that $A_h=\mathcal{O}_FG\cdot a$ and $(a\mid\chi)\in\mathcal{O}_{F^c}^\times$ for all $\chi\in\widehat{G}$.
\end{prop}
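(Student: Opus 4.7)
The strategy is to induce an explicit generator of $A_h$ from a generator of $A^h$ over $\mathcal{O}_F H$, where $H := h(\Omega_F) = \mathrm{Gal}(F^h/F)$, and then reduce the resolvent computation at $\chi \in \widehat{G}$ to a resolvent computation for $\chi|_H$ inside the subgroup $H$. By Proposition~\ref{Aexists}, $H$ is elementary $p$-abelian, and by Erez's theorem $A^h$ is free of rank one over $\mathcal{O}_F H$; pick a generator $b$, so $A^h = \mathcal{O}_F H \cdot b$. Define $a \in F_h$ by $a(h(\omega)) := \omega(b)$ for $\omega \in \Omega_F$ and $a(s) := 0$ for $s \notin H$; this is well defined because $b \in F^h$, and $a \in A_h$ since $A^h$ is Galois-stable.

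The resolvend $\mathbf{r}_G(a) = \sum_{t \in H} \tau_t(b)\, t^{-1}$, where $\tau_t \in \mathrm{Gal}(F^h/F)$ corresponds to $t \in H$, lies in $F^c H \subseteq F^c G$ and coincides with the $H$-resolvend $\mathbf{r}_H(b)$. Proposition~\ref{NBG}(c) applied over the $H$-Galois algebra $F^h$ gives $\mathbf{r}_H(b)\mathbf{r}_H(b)^{[-1]} \in (\mathcal{O}_F H)^\times \subseteq (\mathcal{O}_F G)^\times$, so $A_h = \mathcal{O}_F G \cdot a$ by the same proposition applied over $G$. A direct computation then yields
\[
(a \mid \chi) \;=\; \sum_{t \in H} \tau_t(b)\, \chi(t)^{-1} \;=\; (b \mid \chi|_H)_H
\]
for every $\chi \in \widehat{G}$, which depends only on $\chi|_H \in \widehat{H}$. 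The claim thus reduces to showing $(b \mid \psi)_H \in \mathcal{O}_{F^c}^\times$ for every $\psi \in \widehat{H}$. The case $\psi = 1$ is immediate: Lemma~\ref{prelimval} applied to $a$ with $\chi = 1$ gives $v_F(\mathrm{Tr}_{F^h/F}(b)) = v_F((a\mid 1)) = 0$, hence $\mathrm{Tr}_{F^h/F}(b) \in \mathcal{O}_F^\times$.

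For $\psi \neq 1$, necessarily of order $p$, Lemma~\ref{prelimval} further reduces the task to proving the integrality $(b \mid \psi)_H \in \mathcal{O}_{F^c}$. The plan is to first reduce from rank $r$ to rank one by writing $H = \ker\psi \oplus C$ for a cyclic complement $C \cong \mathbb{Z}/p$ and collapsing the inner sum into a partial trace, so that $(b \mid \psi)_H = (b' \mid \psi|_C)_C$, where $b' := \mathrm{Tr}_{F^h/F^{\ker\psi}}(b)$ lies in the degree-$p$ subextension $F^{\ker\psi}/F$. I would then choose the Erez-type generator $b$ explicitly, using the structure of the weakly and wildly ramified degree-$p$ extension $F^{\ker\psi}/F$, so that the Galois conjugates of $b'$ exhibit the multiplicative-character pattern of Definition~\ref{Gauss}. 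The resolvent $(b' \mid \psi|_C)_C$ then matches, up to a unit in $\mathcal{O}_{F^c}$, a Gauss sum $G(\varphi, j)$, and Proposition~\ref{GaussVal} supplies the desired non-negative valuation bound; Proposition~\ref{gG} enters to verify the compatibility needed for the descent from rank $r$ to rank one.

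The main obstacle is the explicit identification of $(b' \mid \psi|_C)_C$ with a Gauss sum. This demands a careful use of the explicit form of the Erez generator, a judicious choice of uniformizers of $F^{\ker\psi}$, and a verification that the partial trace $b \mapsto b'$ preserves enough of the generator structure to make the match with Definition~\ref{Gauss} transparent. Once that identification is in place, Lemma~\ref{prelimval} combined with the Gauss sum estimates of Section~\ref{computeval} immediately pins the valuation at zero.
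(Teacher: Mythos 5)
Your setup agrees with the paper's: you take a free generator $\alpha$ of $A^h$ over $\mathcal{O}_F\mathrm{Gal}(F^h/F)$ (the paper gets this from Proposition~\ref{Aexists} together with \cite{HJ}), induce $a\in A_h$ by $a(h(\omega)):=\omega(\alpha)$, observe $(a\mid\chi)=\sum_{s\in h(\Omega_F)}a(s)\chi(s)^{-1}$, and use Lemma~\ref{prelimval} to reduce ``unit'' to ``integral''. But the heart of the proposition --- actually proving integrality of $(a\mid\chi)$ for $\chi\neq 1$ --- is precisely the part you leave as a plan, and the mechanism you sketch cannot work as stated. You propose that, after descending to a degree-$p$ quotient, the resolvent ``matches, up to a unit in $\mathcal{O}_{F^c}$, a Gauss sum $G(\varphi,j)$,'' with Proposition~\ref{GaussVal} then giving integrality. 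If $\varphi\neq 1$ this is self-defeating: Proposition~\ref{GaussVal} gives $v(G(\varphi,j))\geq (p-1)/n>0$, so a resolvent equal to a unit times such a Gauss sum would have strictly \emph{positive} valuation, and Lemma~\ref{prelimval} would then force $(a\mid\chi^{-1})$ to have strictly negative valuation --- contradicting the very integrality you are trying to establish. So the identification is either false or forces $\varphi=1$, in which case $G(1,j)=-1$ and it carries no information. The generator supplied by \cite{HJ} is not explicit enough to support such an identification, and your descent step (that $b'=\mathrm{Tr}_{F^h/(F^h)^{\ker\psi}}(b)$ is a suitably normalized generator for the quotient extension) is also unverified; the role you assign to Proposition~\ref{gG} (``compatibility for the descent'') does not correspond to anything it actually provides.

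For comparison, the paper's argument in the only genuinely new case ($p$ odd and $[F(\zeta):F]$ odd; the cases $p=2$ and $[F(\zeta):F]$ even are dispatched by Lemma~\ref{prelimval} and by \cite[Theorem 15.4]{T} respectively --- a case split your proposal omits) is indirect. Assuming $v_{F^h(\zeta)}((a\mid\chi))>0$ for some $\chi$, it forms the auxiliary sum $S=\sum_{s\in h(\Omega_F)}a(s)\sum_{l=1}^{n-1}G(\varphi^l,j_s)$ with $\chi^{-1}(s)=\zeta^{j_s}$. Termwise, Proposition~\ref{GaussVal} combined with $v_{F^h}(A^h)=1-p^r$ and the ramification indices shows $v(S)>0$. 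On the other hand, Proposition~\ref{gG} rewrites $S$ as $(a\mid 1)+n\sum_{k\in R_n}(a\mid\chi^k)-p\sum_{s\in H_0}a(s)$, where the resolvents $(a\mid\chi^k)$ are Galois conjugates of $(a\mid\chi)$ and hence also have positive valuation; since $(a\mid 1)$ is a unit, this forces $v_{F^h}\bigl(p\sum_{s\in H_0}a(s)\bigr)=0$, which is impossible because $v_{F^h}(p)+v_{F^h}(A^h)=p^r(e-1)+1>0$. Your proposal contains neither this contradiction scheme nor a workable substitute, so the key step is missing.
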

\begin{proof}From Proposition~\ref{Aexists}, we know that
\[
v_{F^h}(A^h)\equiv 1\hspace{1cm}\mbox{(mod $|\mbox{Gal}(F^h/F)_1|)$}.
\]
Then, by \cite[Theorem 1.1]{HJ}, we have $A^h=\mathcal{O}_F\mbox{Gal}(F^h/F)\cdot\alpha$ for some $\alpha\in A^h$. Define $a\in\mbox{Map}(G,F^c)$ by setting
\[
a(s):=
\begin{cases}
\omega(\alpha) &\mbox{if $s=h(\omega)$ for $\omega\in\Omega_F$}\\
0 & \mbox{otherwise}.
\end{cases}
\]
It is not hard to see that $a$ is well-defined and that $A_h=\mathcal{O}_FG\cdot a$. It remains to show that $(a\mid\chi)\in\mathcal{O}_{F^c}^\times$ for all $\chi\in\widehat{G}$.

To that end, first observe that for each $\chi\in\widehat{G}$, we have
\begin{equation}\label{achi}
(a\mid\chi)=\sum_{s\in h(\Omega_F)}a(s)\chi(s)^{-1}.
\end{equation}
Note that $h(\Omega_F)\simeq\mbox{Gal}(F^h/F)$ and $\mbox{Gal}(F^h/F)=\mbox{Gal}(F^h/F)_0$ since $F^h/F$ is totally ramified. Because $\mbox{Gal}(F^h/F)_0$ has exponent $p$ by Proposition~\ref{Aexists}, so does $h(\Omega_F)$. In particular, the resolvent $(a\mid\chi)$ lies in $F^h(\zeta)$.

If $p=2$, then $(a\mid\chi)=(a\mid\chi^{-1})$ and hence $(a\mid\chi)\in\mathcal{O}_{F^h(\zeta)}^\times$ by Lemma~\ref{prelimval}. If $p$ is odd and $[F(\zeta):F]$ is even, then $(a\mid\chi)\in\mathcal{O}_{F^h(\zeta)}^\times$ by \cite[Theorem 15.4]{T}.

If $p$ and $[F(\zeta):F]$ are both odd, then $\mbox{Gal}(F(\zeta)/F)\simeq R_n$ for some $n\in\mathbb{N}$ dividing $p-1$ with $n\neq 1$. Now, suppose on the contrary that $(a\mid\chi)\notin\mathcal{O}_{F^h(\zeta)}^\times$ for some $\chi\in\widehat{G}$. By Lemma~\ref{prelimval}, we know that $\chi\neq 1$ and we may assume that $v_{F^h(\zeta)}((a\mid\chi))>0$. For each $s\in h(\Omega_F)$, let  $j_s\in\mathbb{F}_p$ be such that $\chi^{-1}(s)=\zeta^{j_s}$. Let $\varphi\in\widehat{\mathbb{F}_p^\times}$ be any character of order $n$ and consider the sum
\[
S:=\sum_{s\in h(\Omega_F)}a(s)\sum_{l=1}^{n-1}G(\varphi^l,j_s).
\]
We will obtain a contradiction by computing the valuation of $S$. 

First, let $e,d,r\in\mathbb{N}$ be such that the numbers in the  diagram below represent the ramification indices of the extensions. 
\[
\begin{tikzpicture}[baseline=(current bounding box.center)]
\node at (0,0) [name=F] {$F$};
\node at (5,1) [name=F'] {$F(\zeta)$};
\node at (0,2.5) [name=L] {$F^h$};
\node at (5,3.5) [name=L'] {$F^h(\zeta)$};
\node at (0,-2.5) [name=Q] {$\mathbb{Q}_p$};
\node at (5,-1.5) [name=Q'] {$\mathbb{Q}_p(\zeta)$};
\node at (9.25,2) {$v_{F^h}(A^h)=1-p^r$};
\path[font=\small]
(F') edge node[auto] {$\frac{ed}{p-1}$} (Q')
(Q) edge node[auto] {$e$} (F)
(Q') edge node[auto] {$p-1$} (Q)
(F') edge node[auto] {$d$} (F)
(F) edge node[auto] {$p^r$} (L)
(L') edge node[above] {$d$} (L)
(L') edge node[auto] {$p^r$} (F');
\draw[bend left=25] (F) edge node[pos=0.6,auto] {$R_n$} (F');
\end{tikzpicture}
\]
Note that $v_{F^h}(A^h)=1-p^r$ because $v_{F^h}(A^h)=1-|\mbox{Gal}(F^h/F)_0|$ by Proposition~\ref{Aexists}. For each $s\in h(\Omega_F)$ and  $l=1,\dots,n-1$, if $j_s=0$, then $G(\varphi^l,j_s)=0$ by Lemma~\ref{GaussBasic} (a). If $j_s\neq 0$, then using Proposition~\ref{GaussVal}, we obtain
\begin{align}\label{dval}
v_{F^h(\zeta)}(a(s)G(\varphi^l,j_s))
&\geq d(1-p^r)+\frac{ed}{p-1}\cdot p^r\cdot\frac{p-1}{n}\\
&=dp^r\Big(\frac{e}{n}-1\Big)+d.\notag
\end{align}
Note that $nd\leq n|R_n|=p-1$ and that $p-1\leq ed$ by the multiplicativity of ramification indices. It follows that $n\leq e$ and so (\ref{dval}) is positive. We then deduce that $S$ has positive valuation.

Next, let $H_0$ denote the subgroup of $h(\Omega_F)$ consisting of the elements $s$ for 

\noindent which $j_s=0$. Then, for all $s\in H_0$, we have $G(\varphi^l,j_s)=0$ for $l=1,\dots,n-1$ by Lemma~\ref{GaussBasic} (a). Using Proposition~\ref{gG}, we may then rewrite
\[
S=\sum_{s\in h(\Omega_F)}a(s)\left(1+n\sum_{k\in R_n}\zeta^{j_sk}\right)-\sum_{s\in H_0}a(s)\left(1+n\sum_{k\in R_n}\zeta^{(0)k}\right).
\]
Recall that $\chi^{-1}(s)=\zeta^{j_s}$ by definition. By (\ref{achi}), the above the simplifies to
\begin{align*}
S&=\sum_{s\in h(\Omega_F)}a(s)+n\sum_{k\in R_n}\sum_{s\in h(\Omega_F)}a(s)\chi^k(s)^{-1}-p\sum_{s\in H_0}a(s)\\
&=(a\mid 1)+n\sum_{k\in R_n}(a\mid\chi^k)-p\sum_{s\in H_0}a(s).
\end{align*}
Since $[F(\zeta):F]$ and $[F^h:F]$ are coprime, there is a canonical isomorphism
\[
\mbox{Gal}(F^h(\zeta)/F)\simeq\mbox{Gal}(F(\zeta)/F)\times\mbox{Gal}(F^h/F).
\]
For each $k\in R_n$, let $\omega_k$ be the element in $\mbox{Gal}(F(\zeta)/F)$ such that $\omega_k(\zeta)=\zeta^k$ and set $\widetilde{\omega_k}:=\omega_k\times\mbox{id}_{F^h}$. Then, clearly $(a\mid\chi^{k})=\widetilde{\omega_k}((a\mid\chi))$ and so 
\[
v_{F^h(\zeta)}((a\mid\chi^k))=v_{F^h(\zeta)}((a\mid\chi)),
\]
which is positive by assumption. We have already shown that $S$ has positive valuation. Since $(a\mid 1)$ has valuation zero by Lemma~\ref{prelimval}, we deduce that
\[
v_{F^h}\left(p\sum_{s\in H_0}a(s)\right)=0.
\]
Since $a\in A_h$, this in turn implies that
\begin{align*}
0&\geq v_{F^h}(p)+v_{F^h}(A^h)\\
&=ep^r+(1-p^r)\\
&=p^r(e-1)+1,
\end{align*}
which is impossible. Hence,we must have $(a\mid\chi)\in\mathcal{O}_{F^h(\zeta)}^\times$ for all $\chi\in\widehat{G}$.
\end{proof}

\begin{proof}[Proof of Theorem~\ref{units}]Let $h=h^{nr}h^{tot}$ be a factorization of $h$ as in Proposition~\ref{factor} (a). Since $F^{h^{tot}}/F$ is wildly, weakly, and totally ramified, by Proposition~\ref{valtotram}, there exists $a_{tot}\in A_{h^{tot}}$ such that $A_{h^{tot}}=\mathcal{O}_FG\cdot a_{tot}$ and
\begin{equation}\label{val2}
(a_{tot}\mid\chi)\in\mathcal{O}_{F^c}^\times\hspace{1cm}\mbox{for all }\chi\in\widehat{G}.
\end{equation}
On the other hand, by a classical theorem of E. Noether (alternatively, by \cite[Proposition 5.5]{M}), there exists $a_{nr}\in\mathcal{O}_{h^{nr}}$ such that $\mathcal{O}_{h^{nr}}=\mathcal{O}_FG\cdot a_{nr}$, and
\begin{equation}\label{val1}
(a_{nr}\mid\chi)\in\mathcal{O}_{F^c}^\times\hspace{1cm}\mbox{for all }\chi\in\widehat{G}
\end{equation}
as a consequence of Proposition~\ref{NBG} (b). From Proposition~\ref{factor} (b), we then obtain an element 
$a'\in A_h$ such that $A_h=\mathcal{O}_FG\cdot a'$ and $\mathbf{r}_{G}(a')=\mathbf{r}_{G}(a_{nr})\mathbf{r}_{G}(a_{tot})$. Since $A_h=\mathcal{O}_FG\cdot a$ also, we have $a=\gamma\cdot a'$ for some $\gamma\in(\mathcal{O}_FG)^\times$, so
\[
(a\mid\chi)=\gamma(\chi)(a_{nr}\mid\chi)(a_{tot}\mid\chi)\hspace{1cm}
\mbox{for all }\chi\in\widehat{G}.
\]
Clearly $\gamma(\chi)\in\mathcal{O}_{F^c}^\times$ for all $\chi\in\widehat{G}$. The above, together with (\ref{val2}) and (\ref{val1}), then implies that $(a\mid\chi)\in\mathcal{O}_{F^c}^\times$ for all $\chi\in\widehat{G}$ as well.
\end{proof}

\section{Decomposition of Local Wild Resolvends}\label{wildresol}

In this section, let $F$ be a finite extension of $\mathbb{Q}_p$ and assume further that $G$ has odd order. As in Section~\ref{computeval}, let $\zeta=\zeta_p$ be the chosen primitive $p$-th root of unity in $F^c$. We will also use the same notation and conventions set up in Definitions~\ref{defch5} and~\ref{Rn}. 

\subsection{Construction of Local Normal Basis Generators}

The major ingredient of the proof of Theorem~\ref{decomp} is the following proposition (recall the notation from Section~\ref{StickelS} and cf. Theorem~\ref{units}). We remark that its proof is very similar \mbox{to that of} \cite[Proposition 13.2]{T}.

\begin{prop}\label{rrwildmax}Let $h\in\mbox{Hom}(\Omega_F,G)$ be wildly and weakly \mbox{ramified, and be} such that $[F^h:F]=p$. Then, there exists $a\in F_h$ such that $F_h=FG\cdot a$ and
\begin{enumerate}[(1)]
\item $r_G(a)=\Theta^t_*(g)$ for some $g\in\Lambda(FG)^\times$;
\item $(a\mid\chi)\in\mathcal{O}_{F^c}^\times$ for all $\chi\in\widehat{G}$.
\end{enumerate}
\end{prop}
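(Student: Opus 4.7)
The plan is to construct $a$ by modifying the normal integral basis generator for $A^h$ supplied by Proposition \ref{valtotram}. Since $[F^h:F]=p$, the image $h(\Omega_F)$ is a cyclic subgroup of $G$ of order $p$; fix a generator $t$. Applying Proposition \ref{valtotram}, pick $\alpha\in A^h$ generating $A^h$ over $\mathcal{O}_F\mbox{Gal}(F^h/F)$, and let $a_0\in F_h$ be the associated element defined by $a_0(h(\omega))=\omega(\alpha)$ for $\omega\in\Omega_F$ and $a_0(s)=0$ for $s\notin h(\Omega_F)$. Then $A_h=\mathcal{O}_FG\cdot a_0$, and $(a_0\mid\chi)\in\mathcal{O}_{F^c}^\times$ for every $\chi\in\widehat{G}$.

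I would then construct $g\in\Lambda(FG)^\times=\mbox{Map}_{\Omega_F}(G(-1),F^c)^\times$ explicitly, by defining $g$ on $h(\Omega_F)\subset G(-1)$ in terms of the Galois conjugates of $\alpha$ (specifically, $g(t^i)$ would be a product of conjugates of $\alpha$ with exponents coming from the Stickelberger pairing $\langle\chi,t^i\rangle_*$), and extending $g$ to $G\smallsetminus h(\Omega_F)$ arbitrarily by choosing units on each Galois orbit. The $\Omega_F$-equivariance requirement, which under the identification $h(\Omega_F)\simeq\mathbb{F}_p$ reads $\omega(g(t^i))=g(t^{i\kappa(\omega^{-1})})$, then becomes a matching condition between how $\Omega_F$ acts on the conjugates of $\alpha$ (through $\mbox{Gal}(F^h/F)$) and on the $p$-th root of unity $\zeta$ (through $\kappa$).

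To finish, I would verify that for this choice of $g$, the map $\Theta^t_*(g)\in\mathcal{H}(FG)=\mbox{Hom}_{\Omega_F}(S_{\widehat{G}},(F^c)^\times)$ is given, for $\chi\in S_{\widehat{G}}$, by
\[
\Theta^t_*(g)(\chi)=\prod_{s\in G}g(s)^{\langle\chi,s\rangle_*},
\]
and that it equals $r_G(a_0)(\chi)=\prod_\eta(a_0\mid\eta)^{n_\eta}$ (where $\chi=\sum n_\eta\eta$) as elements of $(F^c)^\times$, so that $\Theta^t_*(g)=r_G(a_0)$ in $\mathcal{H}(FG)$. Once this identity is established, any lift of $\Theta^t_*(g)$ to $(F^cG)^\times$ provides $\mathbf{r}_G(a)$ for some $a\in F_h$ with $F_h=FG\cdot a$ (by Proposition \ref{NBG} (a)); by choosing the lift compatibly with $\mathbf{r}_G(a_0)$, the values $(a\mid\chi)$ differ from $(a_0\mid\chi)$ only by a root of unity $\chi(s_0)$ for some fixed $s_0\in G$, and (2) is inherited from the unit property for $a_0$.

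The main obstacle will be the explicit construction and verification of $g$: specifying the values $g(t^i)$ so that $\Omega_F$-equivariance holds, and so that the Stickelberger product $\prod_s g(s)^{\langle\chi,s\rangle_*}$ matches $\prod_\eta(a_0\mid\eta)^{n_\eta}$ for every $\chi\in S_{\widehat{G}}$. This should involve a Gauss-sum style computation in the spirit of those used in the proof of Proposition \ref{valtotram}; the odd-order hypothesis on $G$, via the symmetric range $[(1-p)/2,(p-1)/2]$ for $\upsilon(\chi,t^i)$ and the resulting $\Omega_F$-equivariance of $\Theta_*:\mathbb{Z}\widehat{G}\to\mathbb{Z}G(-1)$ (Proposition \ref{eqvariant}), is what makes the bookkeeping work out, and the argument should parallel the verification in \cite[Proposition 13.2]{T} that handles the tame case.
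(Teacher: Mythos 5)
There is a genuine gap at the heart of your plan: the identity $r_G(a_0)=\Theta^t_*(g)$ that you propose to verify for the normal integral basis generator $a_0$ of $A_h$ is not true, and the paper's proof is structured precisely around this failure. Two elements of $(F^cG)^\times=\mbox{Hom}(\mathbb{Z}\widehat{G},(F^c)^\times)$ induce the same element of $\mathcal{H}(FG)=\mbox{Hom}_{\Omega_F}(S_{\widehat{G}},(F^c)^\times)$ exactly when they differ by an element of $\mbox{Hom}(\widehat{G},(F^c)^\times)=G$, so your claim amounts to $(a_0\mid\chi)=\chi(s_0)\prod_{s}g(s)^{\langle\chi,s\rangle_*}$ for a single $s_0\in G$ and all $\chi$. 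Writing $y_k:=(a_0\mid\chi)=\sum_{j}\tau^j(\alpha)\zeta^{-jk}$ for $\chi(t)=\zeta^{k}$, the right-hand side is (up to a root of unity) a product of finitely many fixed elements raised to exponents $c(\cdot)$ depending linearly on $k$ modulo $p$, whereas the left-hand side is the Gauss-sum-like unit $y_k$; already in the simplest case $\zeta\in F$ the natural candidate would force $y_2$ to equal a root of unity times $y_1^{2}$, which fails. Structurally: if $r_G(a_0)$ were in the image of $\Theta^t_*$, then since any generator $a$ of $A_h$ satisfies $a=\gamma\cdot a_0$ with $\gamma\in(\mathcal{O}_FG)^\times$, Theorem~\ref{decomp} would come out with $\gamma\in(\mathcal{O}_FG)^\times$ rather than merely $\gamma\in\mathcal{M}(FG)^\times$; the weaker conclusion is there exactly because the discrepancy between $r_G(a_0)$ and $\Theta^t_*(g)$ is a unit-valued function on $\widehat{G}$ that is not of the form $\chi\mapsto\chi(s_0)$.

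The missing idea is that the element $a$ in the proposition is only required to generate $F_h$ over $FG$, not $A_h$ over $\mathcal{O}_FG$, so one is free to replace $a_0$ by a genuinely different element. The paper takes the resolvents $y_i$ of $a_0$, records that $\widetilde{\tau}(y_i)=\zeta^{i}y_i$ and $y_i^{p}\in F(\zeta)^\times$, and reassembles them into
\[
\alpha=\frac{1}{p}\sum_{k\in\mathbb{F}_p}\prod_{i\in R_n}y_i^{c(i^{-1}k)}\in L,
\]
whose associated $a\in F_h$ has resolvents $(a\mid\chi)=\prod_{i\in R_n}y_i^{c(i^{-1}d^{-1}k)}$ — manifestly of Stickelberger type, matching $\Theta^t_*(g)$ for $g$ supported on $\{t^{d^{-1}i^{-1}}:i\in R_n\}$ with $g(t^{d^{-1}i^{-1}})=y_i^{p}$ (the $p$-th power is what puts the values in $F(\zeta)^\times$ so that $\Omega_F$-equivariance for the $G(-1)$-action can be checked through $\kappa$ alone). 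Your proposal has the right ingredients (the $y_i$, the pairing, equivariance via $\kappa$) but omits this reassembly, and without it the final identity you intend to verify is false. Two further cautions: the values of $g$ off $h(\Omega_F)$ cannot be chosen to be arbitrary units, since $\langle\chi,s\rangle_*$ is generally nonzero there and such values would change $\Theta^t_*(g)$ (the paper sets them to $1$); and the resolvents of the correct $a$ differ from those of $a_0$ by much more than a character value, so part (2) is not "inherited" from $a_0$ but follows instead from Lemma~\ref{yiunit}, i.e., from each $y_i$ being a unit.
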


In what follows, let $h\in\mbox{Hom}(\Omega_F,G)$ be as in Proposition~\ref{rrwildmax}. Note that $p$ must be odd since $G$ has odd order and $F^h/F$ is wildly ramified of degree $p$.

First of all, we will introduce the basic set up and some essential notation. Set $L:=F^h$. Note that there is a canonical isomorphism
\[
\mbox{Gal}(L(\zeta)/F)\simeq\mbox{Gal}(F(\zeta)/F)\times\mbox{Gal}(L/F)
\]
because $[L:F]$ and $[F(\zeta):F]$ are coprime. Let $n\in\mathbb{N}$ be the unique integer dividing $p-1$ such that $\mbox{Gal}(F(\zeta)/F)\simeq R_n$ and let $d\in\mathbb{F}_p$ denote the class represented by $(p-1)/n$. We will also fix a generator $\tau$ of $\mbox{Gal}(L/F)$ and let $\widetilde{\tau}$ be the element in $\mbox{Gal}(L(\zeta)/F(\zeta))$ which is identified with $\tau$ via the above isomorphism.
 
We summarize the set-up in the diagram below, where the numbers indicate the degrees of the extensions.
\[
\begin{tikzpicture}[baseline=(current bounding box.center)]
\node at (0,0) [name=F] {$F$};
\node at (5,1) [name=F'] {$F(\zeta)$};
\node at (0,2.5) [name=L] {$L$};
\node at (5,3.5) [name=L'] {$L(\zeta)$};
\node at (0,-2.5) [name=Qp] {$\mathbb{Q}_p$};
\node at (1.5,-2.5) {($p$ is odd)};
\node at (10.25,2) {$d:=(p-1)/n\mbox{ (mod $p$)}$};
\path[font=\small]
(Qp) edge node[auto] {} (F)
(F') edge node[auto] {$\frac{p-1}{n}$} (F)
(F) edge node[auto] {$p$} (L)
(L') edge node[above] {$\frac{p-1}{n}$} (L)
(L') edge node[auto] {$p$} (F');
\draw[bend left=25] (F) edge node[pos=0.6,auto] {$R_n$} (F');
\draw[bend left=45] (F) edge node[auto] {$\langle\tau\rangle$} (L)
(L') edge node[auto] {$\langle\widetilde{\tau}\rangle$} (F');
\end{tikzpicture}
\]
Next, as in the proof of Proposition~\ref{valtotram}, we may deduce from Proposition~\ref{Aexists} and \cite[Theorem 1.1]{HJ} that $A^h=\mathcal{O}_F\mbox{Gal}(L/F)\cdot\alpha'$ for some $\alpha'\in A^h$. Moreover, the map
$a'\in\mbox{Map}(G,F^c)$ given by
\[
a'(s):=
\begin{cases}
\omega(\alpha') &\mbox{if $s=h(\omega)$ for $\omega\in\Omega_F$}\\
0 & \mbox{otherwise}
\end{cases}
\]
is well-defined and we have $A_h=\mathcal{O}_FG\cdot a'$. We will define the desired generator $a\in F_h$ using the resolvents (\ref{resolventdef}) of $a'\in A_h$ (cf. the proof of Lemma~\ref{yiunit}).

\begin{definition}\label{yidef}For each $i\in\mathbb{F}_p$, define
\[
y_i:=\sum_{k\in\mathbb{F}_p}\tau^k(\alpha')\zeta^{-ik}.
\]
\end{definition}

\begin{lem}\label{yiunit}For all $i\in\mathbb{F}_p$, we have $y_i\in\mathcal{O}_{L(\zeta)}^\times$.
\end{lem}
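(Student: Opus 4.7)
The plan is to identify each $y_i$ with a resolvent $(a'\mid\chi_i)$ of $a'$ at a suitable character $\chi_i\in\widehat{G}$, and then invoke Proposition~\ref{valtotram}, which already guarantees that $(a'\mid\chi)\in\mathcal{O}_{F^c}^\times$ for every $\chi\in\widehat{G}$. Membership in $\mathcal{O}_{L(\zeta)}^\times$ will then follow since $y_i$ obviously lives in $L(\zeta)$.

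First I would fix a lift $\omega_0\in\Omega_F$ of $\tau$, so that $\omega_0|_L=\tau$, and set $t_0:=h(\omega_0)\in G$. Because $L=F^h/F$ is totally ramified of degree $p$, the image $h(\Omega_F)$ equals $\langle t_0\rangle$, a cyclic subgroup of $G$ of order $p$. From the definition of $a'$ recalled just before Definition~\ref{yidef}, we then have $a'(t_0^k)=\tau^k(\alpha')$ for every $k\in\mathbb{F}_p$, while $a'(s)=0$ for every $s\notin\langle t_0\rangle$. Next, because $G$ is abelian, the restriction map $\widehat{G}\to\widehat{\langle t_0\rangle}$ is surjective, so for each $i\in\mathbb{F}_p$ there exists $\chi_i\in\widehat{G}$ with $\chi_i(t_0)=\zeta^i$. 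Plugging into (\ref{resolventdef}) then gives
\[
(a'\mid\chi_i)=\sum_{k\in\mathbb{F}_p}a'(t_0^k)\chi_i(t_0^k)^{-1}=\sum_{k\in\mathbb{F}_p}\tau^k(\alpha')\zeta^{-ik}=y_i,
\]
so Proposition~\ref{valtotram}, applied to the specific generator $a'$ of $A_h$ under consideration, yields $y_i\in\mathcal{O}_{F^c}^\times$.

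To finish, I would observe that $y_i\in L(\zeta)\subseteq F^c$ is immediate from Definition~\ref{yidef} (each $\tau^k(\alpha')\in L$ and $\zeta\in F(\zeta)\subseteq L(\zeta)$), so $y_i^{-1}\in L(\zeta)$ as well; since both $y_i$ and $y_i^{-1}$ are elements of $L(\zeta)$ integral over $\mathcal{O}_F$, both lie in $\mathcal{O}_{L(\zeta)}$, which yields $y_i\in\mathcal{O}_{L(\zeta)}^\times$ as desired. There is no real obstacle in this lemma; essentially all the content is in recognizing $y_i=(a'\mid\chi_i)$, after which Proposition~\ref{valtotram} does all the work.
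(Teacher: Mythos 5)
Your plan — recognize $y_i$ as the resolvent $(a'\mid\chi_i)$ for a suitable $\chi_i\in\widehat{G}$ and then appeal to the unit property of resolvents — is exactly the paper's approach, and the computation identifying $y_i$ with $(a'\mid\chi_i)$ is correct. The one thing to fix is the citation: you invoke Proposition~\ref{valtotram}, but that proposition is an \emph{existence} statement (\emph{there exists} a generator $a$ of $A_h$ with $(a\mid\chi)\in\mathcal{O}_{F^c}^\times$ for all $\chi$); it does not, as stated, say anything about the \emph{particular} generator $a'$ you are working with, so you cannot simply ``apply'' it to $a'$. What you want is Theorem~\ref{units}, which asserts that \emph{every} $a$ with $A_h=\mathcal{O}_FG\cdot a$ has all its resolvents in $\mathcal{O}_{F^c}^\times$; this applies directly to $a'$ and is the reference the paper uses here. (Alternatively, one could observe that $a'$ is constructed in exactly the way the proof of Proposition~\ref{valtotram} builds its generator and cite the proof rather than the statement, but that is a clumsier route when Theorem~\ref{units} is available.) The closing observation that $y_i\in L(\zeta)$ combined with $y_i\in\mathcal{O}_{F^c}^\times$ gives $y_i\in\mathcal{O}_{L(\zeta)}^\times$ is fine and is left implicit in the paper.
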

\begin{proof}Let $\omega_\tau\in\Omega_F$ be a lift of $\tau$. Then, the isomorphism $\mbox{Gal}(L/F)\simeq h(\Omega_F)$ induced by $h$ identifies $\tau$ with $h(\omega_\tau)$. Define $t:=h(\omega_\tau)$ and notice that $t$ has order $p$ because $[L:F]=p$ by hypothesis. Now, given $i\in\mathbb{F}_p$, let $\chi\in\widehat{G}$ be any character such that $\chi(t)=\zeta^i$. Then, we have
\[
(a'\mid\chi)=\sum_{s\in h(\Omega_F)}a'(s)\chi(s)^{-1}=\sum_{k\in\mathbb{F}_p}\tau^k(\alpha')\chi(t)^{-k},
\]
which is equal to $y_i$. It then follows from Theorem~\ref{units} that $y_i\in\mathcal{O}_{L(\zeta)}^\times$.
\end{proof}

\begin{lem}\label{yiprop1}For all $i\in\mathbb{F}_p$, we have $\widetilde{\tau}(y_i)=\zeta^iy_i$ and $y_i^p\in F(\zeta)^\times$.
\end{lem}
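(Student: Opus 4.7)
The plan is to prove both claims by direct computation using the definition of $y_i$ and the fact that $\widetilde\tau$ generates $\mathrm{Gal}(L(\zeta)/F(\zeta))$.

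For the first claim, I would apply $\widetilde\tau$ term-by-term to the sum defining $y_i$. Since $\widetilde\tau$ restricts to $\tau$ on $L$ and fixes $\zeta$ (because $\widetilde\tau\in\mathrm{Gal}(L(\zeta)/F(\zeta))$), we get
\[
\widetilde\tau(y_i)=\sum_{k\in\mathbb{F}_p}\tau^{k+1}(\alpha')\zeta^{-ik}.
\]
Re-indexing via $j=k+1$ (noting $\tau$ has order $p$, so the index shift is just a permutation of $\mathbb{F}_p$) gives
\[
\widetilde\tau(y_i)=\sum_{j\in\mathbb{F}_p}\tau^{j}(\alpha')\zeta^{-i(j-1)}=\zeta^{i}\sum_{j\in\mathbb{F}_p}\tau^{j}(\alpha')\zeta^{-ij}=\zeta^{i}y_i,
\]
which is the desired identity.

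For the second claim, I would simply raise the first identity to the $p$-th power: $\widetilde\tau(y_i^p)=(\zeta^i)^p y_i^p=y_i^p$, since $\zeta$ is a primitive $p$-th root of unity. Because $\widetilde\tau$ generates $\mathrm{Gal}(L(\zeta)/F(\zeta))$, the fixed field is $F(\zeta)$, so $y_i^p\in F(\zeta)$. Finally, Lemma~\ref{yiunit} gives $y_i\in\mathcal{O}_{L(\zeta)}^\times$, hence $y_i^p\in F(\zeta)^\times$.

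There is no real obstacle here: the first identity is a one-line index shift, and the second is an immediate consequence combined with the preceding lemma. The only thing to be careful about is to record explicitly that $\widetilde\tau$ fixes $\zeta$ (by its definition as the element of $\mathrm{Gal}(L(\zeta)/F(\zeta))$ identified with $\tau$), and that the cyclic group $\langle\widetilde\tau\rangle$ of order $p$ has fixed field exactly $F(\zeta)$, so that invariance under $\widetilde\tau$ is enough to conclude membership in $F(\zeta)$.
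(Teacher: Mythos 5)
Your proof is correct and follows essentially the same route as the paper's: the first identity is a direct index-shift computation, and the second is deduced from it via the fact that $\langle\widetilde\tau\rangle=\mathrm{Gal}(L(\zeta)/F(\zeta))$. The paper phrases the second step as a norm computation, $N_{L(\zeta)/F(\zeta)}(y_i)=\prod_k\widetilde\tau^k(y_i)=y_i^p$, whereas you use Galois-invariance of $y_i^p$ directly, but these are the same observation and both invoke Lemma~\ref{yiunit} to get membership in $F(\zeta)^\times$ rather than merely $F(\zeta)$.
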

\begin{proof}Given $i\in\mathbb{F}_p$, the claim that $\widetilde{\tau}(y_i)=\zeta^iy_i$ follows from a simple calculation. Using this, we further deduce that
\[
N_{L(\zeta)/F(\zeta)}(y_i)=\prod_{k\in\mathbb{F}_p}\widetilde{\tau}^k(y_i)=\prod_{k\in\mathbb{F}_p}\zeta^{ik}y_i=y_i^p.
\]
Thus, indeed $y_i^p\in F(\zeta)^\times$, and this proves the lemma.
\end{proof}

Recall Definitions~\ref{defch5} and~\ref{Rn}. Consider the element
\[
\alpha:=\frac{1}{p}\Bigg(\sum_{k\in\mathbb{F}_p}\prod_{i\in R_n}y_i^{c(i^{-1}k)}\Bigg)\\
=\frac{1}{p}\Bigg(1+\prod_{i\in R_n}y_i^{c(i^{-1})}+\cdots+\prod_{i\in R_n}y_{i}^{c(i^{-1}(p-1))}\Bigg).
\]
We will show that the map $a\in\mbox{Map}(G,F^c)$ defined by
\begin{equation}\label{awildmax}
a(s):=\begin{cases}
\omega(\alpha) & \mbox{if }s=h(\omega)\mbox{ for }\omega\in\Omega_F\\
0 & \mbox{otherwise}
\end{cases}
\end{equation}
is well-defined and that it has the desired properties.

\begin{definition}\label{omegaimax}For each $i\in R_n$, define
\[
\omega_i\in\mbox{Gal}(L(\zeta)/L);\hspace{1em}\omega_i(\zeta):=\zeta^{i}
\]
(note that our notation here is slightly different from that used in \cite[Definition 13.4]{T}). Clearly, we have
\begin{equation}\label{omegaiyj}
\omega_i(y_j)=y_{ij}\hspace{1cm}\mbox{for all $j\in\mathbb{F}_p$}.
\end{equation}
\end{definition}

First, we show that $\alpha\in L$, which will then imply that $a$ is well-defined.

\begin{lem}\label{inLmax}We have $\alpha\in L$.
\end{lem}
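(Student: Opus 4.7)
The plan is to show that $\alpha\in L$ by first noting that $\alpha\in L(\zeta)$ automatically, and then verifying that $\alpha$ is fixed by $\mathrm{Gal}(L(\zeta)/L)=\{\omega_i:i\in R_n\}$. The first point is immediate: by Lemma~\ref{yiunit}, each $y_i$ lies in $\mathcal{O}_{L(\zeta)}^\times$, so every summand defining $\alpha$ lies in $L(\zeta)$, hence so does $\alpha$.

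For the invariance, fix $i\in R_n$ and apply $\omega_i$ termwise to the defining sum of $\alpha$. Using (\ref{omegaiyj}), the inner product transforms as
\[
\omega_i\Big(\prod_{j\in R_n}y_j^{c(j^{-1}k)}\Big)=\prod_{j\in R_n}y_{ij}^{c(j^{-1}k)}.
\]
Since $R_n$ is a subgroup of $\mathbb{F}_p^\times$, the map $j\mapsto i^{-1}j$ is a bijection of $R_n$, and relabeling rewrites the right-hand side as $\prod_{j\in R_n}y_j^{c(j^{-1}(ik))}$, where I use that $c(\,\cdot\,)$ depends only on the $\mathbb{F}_p$-class of its argument (Definition~\ref{defch5}). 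Substituting this back,
\[
\omega_i(\alpha)=\frac{1}{p}\sum_{k\in\mathbb{F}_p}\prod_{j\in R_n}y_j^{c(j^{-1}(ik))},
\]
and then applying the bijection $k\mapsto i^{-1}k$ of $\mathbb{F}_p$ to the outer index recovers the defining sum of $\alpha$. Hence $\omega_i(\alpha)=\alpha$ for every $i\in R_n$, so $\alpha\in L(\zeta)^{R_n}=L$.

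The entire argument is a double reindexing that leverages the group structure of $R_n\subseteq\mathbb{F}_p^\times$ together with the behavior of $\omega_i$ on the $y_j$. There is no real obstacle; the only care needed is to keep the two substitutions (inside the product, and outside in the outer sum) distinct and to ensure that the exponents $c(j^{-1}k)$ transform consistently under the reindexing of $j$, which works precisely because $c$ is a function on $\mathbb{F}_p$.
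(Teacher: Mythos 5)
Your proof is correct and follows essentially the same route as the paper: observe that $\alpha\in L(\zeta)$, then show invariance under each $\omega_i\in\mathrm{Gal}(L(\zeta)/L)$ via (\ref{omegaiyj}) and the reindexing $j\mapsto i^{-1}j$ inside the product. The only cosmetic difference is that you also reindex the outer sum $k\mapsto i^{-1}k$ explicitly, whereas the paper phrases the same fact as ``$\omega_i$ permutes the summands''; these are the same observation.
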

\begin{proof}Clearly $\alpha\in L(\zeta)$ because $y_i\in L(\zeta)$ for all $i\in\mathbb{F}_p$ by definition. Hence, we have $\alpha\in L$ if and only if $\alpha$ is fixed by the action of $\mbox{Gal}(L(\zeta)/L)$. \mbox{Now, an} element of $\mbox{Gal}(L(\zeta)/L)$ is equal to $\omega_j$ for some $j\in R_n$. Then, \mbox{using (\ref{omegaiyj}), we} deduce that for each $k\in\mathbb{F}_p$, we have
\[
\omega_j\left(\prod_{i\in R_n}y_i^{c(i^{-1}k)}\right)
=\prod_{i\in R_n}y_{ij}^{c(i^{-1}k)}
=\prod_{i\in R_n}y_{i}^{c(i^{-1}jk)}.
\]
This implies that $\omega_j$ permutes the summands
\[
1,\prod_{i\in R_n}y_{i}^{c(i^{-1})},\dots,\prod_{i\in R_n}y_{i}^{c(i^{-1}(p-1))}
\]
in the definition of $\alpha$ and hence fixes $\alpha$. Thus, indeed $\alpha\in L$.
\end{proof}

Next, we compute the Galois conjugates of $\alpha$ in $L/F$.

\begin{prop}\label{conjugatesmax}For all $j,k\in\mathbb{F}_p$, we have
\[
\widetilde{\tau}^{j}\left(\prod_{i\in R_n}y_i^{c(i^{-1}k)}\right)=\zeta^{jkd}\cdot\prod_{i\in R_n}y_i^{c(i^{-1}k)}.
\]
In particular, this implies that for all $j\in\mathbb{F}_p$, we have
\[
\tau^j(\alpha)=\frac{1}{p}\sum_{k\in\mathbb{F}_p}\left(\zeta^{jkd}\prod_{i\in R_n}y_i^{c(i^{-1}k)}\right).
\]
\end{prop}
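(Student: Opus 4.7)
The plan is to prove the first displayed identity by a direct computation using $\widetilde{\tau}(y_i)=\zeta^{i}y_i$ from Lemma~\ref{yiprop1}, and then deduce the second identity by restricting $\widetilde{\tau}^{j}$ to $L$, which is legitimate because $\alpha\in L$ by Lemma~\ref{inLmax}.

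\textbf{Step 1: Action of $\widetilde{\tau}^{j}$ on $y_i$.} Since $\widetilde{\tau}$ corresponds, under the decomposition $\mathrm{Gal}(L(\zeta)/F)\simeq\mathrm{Gal}(F(\zeta)/F)\times\mathrm{Gal}(L/F)$, to the pair $(\mathrm{id}_{F(\zeta)},\tau)$, it fixes $\zeta$. By Lemma~\ref{yiprop1}, $\widetilde{\tau}(y_i)=\zeta^{i}y_i$, and a trivial induction then gives $\widetilde{\tau}^{j}(y_i)=\zeta^{ji}y_i$ for every $j\in\mathbb{F}_p$.

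\textbf{Step 2: Key congruence.} Raising to the power $c(i^{-1}k)$, where $i\in R_n\subset\mathbb{F}_p^{\times}$ and $k\in\mathbb{F}_p$, I get
\[
\widetilde{\tau}^{j}\bigl(y_i^{c(i^{-1}k)}\bigr)=\zeta^{\,ji\cdot c(i^{-1}k)}\,y_i^{c(i^{-1}k)}.
\]
By the definition of $c(\,\cdot\,)$ in Definition~\ref{defch5}, the integer $c(i^{-1}k)$ represents the class $i^{-1}k\in\mathbb{F}_p$; hence $i\cdot c(i^{-1}k)\equiv k\pmod p$, so the exponent of $\zeta$ becomes $jk$ regardless of $i$.

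\textbf{Step 3: Product over $R_n$.} Taking the product over $i\in R_n$, I obtain
\[
\widetilde{\tau}^{j}\!\left(\prod_{i\in R_n}y_i^{c(i^{-1}k)}\right)=\zeta^{jk|R_n|}\prod_{i\in R_n}y_i^{c(i^{-1}k)}.
\]
Because $|R_n|=(p-1)/n$ and $d$ is by definition the class of $(p-1)/n$ in $\mathbb{F}_p$, the scalar is $\zeta^{jkd}$, giving the first displayed formula. For the second formula, $\tau^{j}$ acts on $\alpha$ as $\widetilde{\tau}^{j}$ does, since $\alpha\in L$ by Lemma~\ref{inLmax} and $\widetilde{\tau}|_{L}=\tau$; plugging the definition of $\alpha$ into the just-proven identity and pulling the action through the finite sum gives the stated expression for $\tau^{j}(\alpha)$.

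There is no real obstacle in this proof; the only point requiring care is making the congruence $i\cdot c(i^{-1}k)\equiv k\pmod p$ explicit so that the $\zeta$-exponent simplifies to $jk$ independently of $i\in R_n$, and likewise identifying $|R_n|$ with the residue $d$. Both are bookkeeping consequences of Definitions~\ref{defch5} and~\ref{Rn}.
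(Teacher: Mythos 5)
Your proof is correct and follows essentially the same route as the paper: both reduce to the identity $\widetilde{\tau}^{j}(y_i)=\zeta^{ij}y_i$, then use the congruence $i\cdot c(i^{-1}k)\equiv k\pmod p$ and the count $|R_n|=(p-1)/n\equiv d$ to collapse the $\zeta$-exponents in the product. The one small difference is in how you obtain $\widetilde{\tau}^{j}(y_i)=\zeta^{ij}y_i$: you apply Lemma~\ref{yiprop1} to $y_i$ directly and iterate (observing that $\widetilde{\tau}$ fixes $\zeta$ since it lies in $\mathrm{Gal}(L(\zeta)/F(\zeta))$), whereas the paper applies the lemma only to $y_1$ and then transports to $y_i$ via $y_i=\omega_i(y_1)$ and the commutativity of $\mathrm{Gal}(L(\zeta)/F)$; your version is slightly more direct, and either is fine.
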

\begin{proof}Let $j,k\in\mathbb{F}_p$ be given. Notice that $\widetilde{\tau}^j(y_1)=\zeta^j y_1$ by Lemma~\ref{yiprop1}, and that $y_i=\omega_i(y_1)$ for all $i\in R_n$ by (\ref{omegaiyj}). Because $\mbox{Gal}(L(\zeta)/F)$ is abelian, for each $i\in R_n$, we have that
\begin{align*}
\widetilde{\tau}^{j}(y_i)
&=(\widetilde{\tau}^j\circ\omega_i)(y_1)\\
&=(\omega_i\circ\widetilde{\tau}^j)(y_1)\\
&=\omega_i(\zeta^{j}y_1)\\
&=\zeta^{ij}y_i.
\end{align*}
It then follows that
\begin{align*}
\widetilde{\tau}^j\left(\prod_{i\in R_n}y_i^{c(i^{-1}k)}\right)
&=\prod_{i\in R_n}\zeta^{ijc(i^{-1}k)}y_i^{c(i^{-1}k)}\\
&=\prod_{i\in R_n}\zeta^{jk}y_i^{c(i^{-1}k)}\\
&=\zeta^{jk(p-1)/n}\cdot\prod_{i\in R_n}y_i^{c(i^{-1}k)}.
\end{align*}
Since $(p-1)/n$ represents $d\in\mathbb{F}_p$ by definition, the claim now follows.
\end{proof}

Finally, we define the desired element $g\in\Lambda(FG)^\times$ (recall (\ref{lambda})).  As in the 

\noindent proof of Lemma~\ref{yiunit}, let $\omega_\tau\in\Omega_F$ be a lift of $\tau$ and set $t:=h(\omega_\tau)$, which has order $p$. It will also be helpful to recall Definition~\ref{cyclotomic}.

\begin{lem}\label{gmax}
The map $g\in\mbox{Map}(G(-1),(F^c)^\times)$ given by
\[
g(s):=
\begin{cases}
y_i^p&\mbox{if }s=t^{d^{-1}i^{-1}}\mbox{ for }i\in R_n\\
1&\mbox{otherwise}
\end{cases}
\]
is well-defined and preserves the $\Omega_F$-action. Thus, we have $g\in\Lambda(FG)^\times$.
\end{lem}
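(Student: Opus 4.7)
The plan is to verify two conditions: that $g$ is well-defined as a function $G(-1) \to (F^c)^\times$, and that it is $\Omega_F$-equivariant for the twisted action on $G(-1)$. Once both are established, the conclusion $g \in \Lambda(FG)^\times$ follows directly from the definition (\ref{lambda}).

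Well-definedness is essentially a bookkeeping check. Since $t$ has order $p$ and the map $i \mapsto d^{-1}i^{-1}$ is a bijection of $R_n$ onto the coset $d^{-1}R_n \subseteq \mathbb{F}_p^\times$, the elements $\{t^{d^{-1}i^{-1}}\}_{i \in R_n}$ are pairwise distinct, so each $s \in G$ lies in the first branch of the case distinction for at most one $i \in R_n$, making $g(s)$ unambiguous. Moreover, each $y_i^p \in \mathcal{O}_{L(\zeta)}^\times \subseteq (F^c)^\times$ by Lemma~\ref{yiunit}, so $g$ indeed takes values in $(F^c)^\times$.

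The substantive step is $\Omega_F$-equivariance, i.e., $g(\omega \cdot s) = \omega(g(s))$ for all $\omega \in \Omega_F$ and $s \in G$, where $\omega \cdot s = s^{\kappa(\omega^{-1})}$ by Definition~\ref{cyclotomic}. Fix $\omega \in \Omega_F$ and let $i' \in R_n$ be the reduction of $\kappa(\omega)$ modulo $p$; this does lie in $R_n$ because in the set-up preceding Definition~\ref{yidef} the identification $\mbox{Gal}(F(\zeta)/F) \simeq R_n$ is precisely the one induced by the cyclotomic character modulo $p$. Via the decomposition $\mbox{Gal}(L(\zeta)/F) \simeq \mbox{Gal}(F(\zeta)/F) \times \mbox{Gal}(L/F)$, I would write the restriction of $\omega$ to $L(\zeta)$ as $\omega_{i'} \widetilde{\tau}^m$ for some $m \in \mathbb{F}_p$, and then split into two cases.

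If $s = t^{d^{-1}i^{-1}}$ with $i \in R_n$, a direct computation using $\kappa(\omega^{-1}) \equiv (i')^{-1} \pmod{p}$ gives $\omega \cdot s = t^{d^{-1}(ii')^{-1}}$ with $ii' \in R_n$, so $g(\omega \cdot s) = y_{ii'}^p$. For the other side, Lemma~\ref{yiprop1} yields $\widetilde{\tau}^m(y_i) = \zeta^{im} y_i$, and combining this with $\omega_{i'}(\zeta) = \zeta^{i'}$ and (\ref{omegaiyj}) gives $\omega(y_i) = \zeta^{ii'm} y_{ii'}$; raising to the $p$-th power kills the $\zeta$ factor, leaving $\omega(g(s)) = y_{ii'}^p = g(\omega \cdot s)$, as required. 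If instead $s$ is not of the form $t^{d^{-1}i^{-1}}$ for any $i \in R_n$, then $g(s) = 1$, and the identity $s = (\omega \cdot s)^{\kappa(\omega)}$ shows that $\omega \cdot s$ cannot be of that form either (otherwise reindexing via $(i')^{-1} \in R_n$ would exhibit $s$ in the forbidden form), hence $g(\omega \cdot s) = 1 = \omega(1)$.

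I do not foresee any serious obstacle; the only delicate point is keeping the various exponents in $\mathbb{F}_p^\times$ consistent once the $-1$-twist in $G(-1)$, the cyclotomic character $\kappa$, and the reindexing $i \mapsto d^{-1}i^{-1}$ are all combined at once.
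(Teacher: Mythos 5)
Your proof is correct and takes essentially the same route as the paper: the same two-case analysis, the same use of (\ref{omegaiyj}) and Lemma~\ref{yiprop1}, and the same contrapositive argument showing the second case is stable under the action. The only cosmetic difference is that the paper justifies $\omega(y_i^p)=y_{ij}^p$ by noting $y_i^p\in F(\zeta)^\times$, so that only $\omega|_{F(\zeta)}=\omega_j|_{F(\zeta)}$ matters, whereas you compute the full action of $\omega|_{L(\zeta)}=\omega_{i'}\widetilde{\tau}^m$ on $y_i$ and observe that raising to the $p$-th power kills the extraneous root of unity.
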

\begin{proof}
It is clear that $g$ is well-defined because $t$ has order $p$. To show that $g$ preserves the $\Omega_F$-action, let $\omega\in\Omega_F$ and $s\in G(-1)$ be given. 

If $s=t^{d^{-1}i^{-1}}$ for some $i\in R_n$, then $s$ has order $p$ and so $\omega\cdot s$ is determined by the action of $\omega$ on $\zeta$. Let $j\in R_n$ be such that $\omega|_{F(\zeta)}=\omega_j|_{F(\zeta)}$. Then, we have $\omega^{-1}(\zeta)=\zeta^{j^{-1}}$ and so $\omega\cdot s=s^{j^{-1}}=t^{d^{-1}i^{-1}j^{-1}}$. Recall that $y_i^p\in F(\zeta)$ by Lemma~\ref{yiprop1} and that $y_{ij}=\omega_j(y_i)$ by (\ref{omegaiyj}). We then deduce that
\[
g(\omega\cdot s)=y_{ij}^p=\omega_j(y_i^p)=\omega(g(s)).
\]
Now, if $\omega\cdot s= t^{d^{-1}i^{-1}}$ for some $i\in R_n$, then the same argument above shows that $s=\omega^{-1}\cdot(\omega\cdot s)=t^{d^{-1}i^{-1}j^{-1}}$ for some $j\in R_n$ as well. Hence, if $s\neq t^{d^{-1}i^{-1}}$ for all $i\in R_n$, then the same is true for $\omega\cdot s$. In this case, we have
\[
g(\omega\cdot s)=1=\omega(1)=\omega(g(s)).
\]
Hence, indeed $g$ preserves the $\Omega_F$-action, and $g\in\Lambda(FG)^\times$ by definition.
\end{proof}

\begin{proof}[Proof of Proposition~\ref{rrwildmax}]Let $a\in\mbox{Map}(G,F^c)$ and $g\in\Lambda(FG)^\times$ be as in (\ref{awildmax}) and Lemma~\ref{gmax}, respectively. Since $\alpha\in L$ by Lemma~\ref{inLmax} and $L=F^h$, it is clear that $a$ is well-defined and that $a\in F_h$. 

First, we will use the identification $\mathcal{H}(FG)=\mbox{Hom}_{\Omega_F}(S_{\widehat{G}},(F^c)^\times)$ in (\ref{idenH}) to show that $r_G(a)=\Theta^t_*(g)$. To that end, let $\chi\in\widehat{G}$ be given and let $k\in\mathbb{F}_p$ be such that $\chi(t)=\zeta^{k}$. Observe that $\langle\chi,t^{d^{-1}i^{-1}}\rangle_*=c(d^{-1}i^{-1}k)/p$ for all $i\in R_n$ by Definitions~\ref{Stickel} and~\ref{defch5}, and so
\[
\Theta_{*}^{t}(g)(\chi)
=\prod_{s\in G}g(s)^{\langle\chi,s\rangle_*}
=\prod_{i\in R_n}y_i^{c(d^{-1}i^{-1}k)}.
\]
On the other hand, because $\tau$ is identified with $t:=h(\omega_\tau)$ via the isomorphism $\mbox{Gal}(L/F)\simeq h(\Omega_F)$ induced by $h$, we have that
\[
(a\mid\chi)=\sum_{s\in h(\Omega_F)}a(s)\chi(s)^{-1}
=\sum_{j\in\mathbb{F}_p}\tau^{j}(\alpha)\zeta^{-jk}.
\]
Then, using Proposition~\ref{conjugatesmax}, we obtain
\begin{align*}
(a\mid\chi)
&=\frac{1}{p}\sum_{j\in\mathbb{F}_p}\sum_{l\in\mathbb{F}_p}\Bigg(\zeta^{jld}\prod_{i\in R_n} y_i^{c(i^{-1}l)}\Bigg)\zeta^{-jk}\\
&=\frac{1}{p}\sum_{l\in\mathbb{F}_p}\Bigg(\prod_{i\in R_n} y_i^{c(i^{-1}l)}\sum_{j\in\mathbb{F}_p}\zeta^{j(ld-k)}\Bigg)\\
&=\prod_{i\in R_n}y_i^{c(i^{-1}d^{-1}k)}.
\end{align*}
Hence, indeed $r_{G}(a)=\Theta_{*}^{t}(g)$. Since $y_i\in\mathcal{O}_{L(\zeta)}^\times$ for each $i\in R_n$ by Lemma~\ref{yiunit}, the above also shows that $(a\mid\chi)\in\mathcal{O}_{L(\zeta)}^\times$ for all $\chi\in\widehat{G}$. Via the identification in (\ref{iden1}), this implies that $\mathbf{r}_G(a)\in (F^cG)^\times$ as well, and thus $F_h=FG\cdot a$ by Proposition~\ref{NBG} (a). This proves that the element $a$ in (\ref{awildmax}) satisfies all of the properties claimed in Proposition~\ref{rrwildmax}.
\end{proof}

\subsection{Proof of Theorem~\ref{decomp}}

\begin{proof}[Proof of Theorem~\ref{decomp}]
Let $h=h^{nr}h^{tot}$ be a factorization of $h$ as in Proposition~\ref{factor} (a). Since $F^{h^{tot}}/F$ is wildly, weakly, and totally ramified, the Galois group $\mbox{Gal}(F^{h^{tot}}/F)$ has exponent $p$ by Proposition~\ref{Aexists}.

Since $h^{tot}(\Omega_F)\simeq\mbox{Gal}(F^{h^{tot}}/F)$, we have
\begin{equation}\label{directprod}
h^{tot}(\Omega_F)=H_1\times \cdots\times H_r
\end{equation}
for subgroups $H_1,\dots,H_r$ each of order $p$. For each $i=1,\dots,r$, define
\[
h_i\in\mbox{Hom}(\Omega_F,G);\hspace{1em}h_i(\omega):=\uppi_i(h^{tot}(\omega)),
\]
where $\uppi_i:h^{tot}(\Omega_F)\longrightarrow H_i$ is the canonical projection map given by (\ref{directprod}). It is clear that $h^{tot}=h_1\cdots h_r$. For each $i=1,\dots,r$, observe that $F^{h_i}\subset F^{h^{tot}}$ and $[F^{h_i}:F]=p$. Since a Galois subextension of a weakly ramified extension is still weakly ramified (see \cite[Proposition 2.2]{V}, for example), each $h_i$ is wildly and weakly ramified. Hence, Proposition~\ref{rrwildmax} applies and there exists
 $a_i\in F_{h_i}$ with $F_{h_i}=FG\cdot a_i$ such that
\[
r_G(a_i)=\Theta^t_*(g_i)\hspace{1cm}\mbox{for some }g_i\in\Lambda(FG)^\times
\]
and that $(a_i\mid\chi)\in\mathcal{O}_{F^c}^\times$ for all $\chi\in\widehat{G}$. 

On the other hand, by a classical theorem of E. Noether (alternatively, by \cite[Proposition 5.5]{M}), there exists $a_{nr}\in\mathcal{O}_{h^{nr}}$ such that $\mathcal{O}_{h^{nr}}=\mathcal{O}_FG\cdot a_{nr}$. In addition, we know from (\ref{rrunram}) that
\[
r_G(a_{nr})=u\hspace{1cm}\mbox{for some }u\in\mathcal{H}(\mathcal{O}_FG)
\]
and that $(a_{nr}\mid\chi)\in\mathcal{O}_{F^c}^\times$ for all $\chi\in\widehat{G}$. Now, since $\mathbf{r}_G$ is bijective, there exists $a'\in\mbox{Map}(G,F^c)$ such that
\begin{equation}\label{a'}
\mathbf{r}_G(a')=\mathbf{r}_G(a_{nr})\mathbf{r}_G(a_1)\cdots\mathbf{r}_G(a_r).
\end{equation}
From (\ref{resol1}) and Proposition~\ref{NBG} (a), in fact we have $a'\in F_h$ and $F_h=FG\cdot a'$. But $F_h=FG\cdot a$ also, so we have $a=\gamma\cdot a'$ for some $\gamma\in(FG)^\times$, and
\[
r_G(a)=rag(\beta)r_G(a')=rag(\gamma)u\Theta^t_*(g),
\]
where $g:=g_1\cdots g_r\in\Lambda(FG)^\times$. It remains to show that $\gamma\in\mathcal{M}(FG)^\times$.

To that end, observe that 
\[
(FG)^\times=\mbox{Map}_{\Omega_F}(\widehat{G},(F^c)^\times)
\]
from the identification in (\ref{iden1}), and so
\[
\mathcal{M}(FG)^\times=\mbox{Map}_{\Omega_F}(\widehat{G},\mathcal{O}_{F^c}^\times).
\]
Now, for any $\chi\in\widehat{G}$, by definition we have
\[
(a\mid\chi)=\gamma(\chi)(a_{nr}\mid\chi)(a_1\mid\chi)\cdots (a_r\mid\chi).
\]
We know that $(a_{nr}\mid\chi),(a_1\mid\chi),\dots,(a_r\mid\chi)\in\mathcal{O}_{F^c}^\times$. Since $(a\mid\chi)\in\mathcal{O}_{F^c}^\times$ by Theorem~\ref{units}, it follows that $\gamma(\chi)\in\mathcal{O}_{F^c}^\times$ as well. Thus, indeed $\gamma\in\mathcal{M}(FG)^\times$ and this proves the theorem.
\end{proof}

\end{document}